 \pgfplotsset{compat=1.15}
\definecolor{backcolor2}{rgb}{1,.7,0.7}
\tikzset{circle split part fill/.style  args={#1,#2}{%
 alias=tmp@name, 
  postaction={%
    insert path={
     \pgfextra{% 
     \pgfpointdiff{\pgfpointanchor{\pgf@node@name}{center}}%
                  {\pgfpointanchor{\pgf@node@name}{east}}%            
     \pgfmathsetmacro\insiderad{\pgf@x}
      \fill[#1] (\pgf@node@name.base) ([xshift=-\Aufgaben werden angezeigtpgflinewidth]\pgf@node@name.east) arc
                          (0:180:\insiderad-\pgflinewidth)--cycle;
      \fill[#2] (\pgf@node@name.base) ([xshift=\pgflinewidth]\pgf@node@name.west)  arc
                          (180:360:\insiderad-\pgflinewidth)--cycle;            
         }}}}}  
\pgfplotsset{select coords between index/.style 2 args={
    x filter/.code={
        \ifnum\coordindex<#1\fi
        \ifnum\coordindex>#2\fi
    }
}}
\tikzset{
    position/.style args={#1:#2 from #3}{
        at=(#3), anchor=#1+180, shift=(#1:#2)
    }
}
\newcommand{\id}{\mbox{Id}}
\newcommand{\code}[1]{\texttt{#1}}
\DeclareMathOperator{\spa}{span}
\DeclareMathOperator*{\argmin}{arg\,min}
\definecolor{backcolor}{rgb}{.7,.7,1}
\definecolor{backcolor2}{rgb}{1,.7,0.7}
\DeclareMathOperator{\spa}{span}
\newtheoremstyle{plainNoItalics}{}{}{\normalfont}{}{\bfseries}{.}{ }{}
\theoremstyle{plain}
\newtheorem{theorem}{Theorem}[section]
\newtheorem{lemma}[theorem]{Lemma}
\newtheorem*{theorem*}{Theorem}
\newtheorem*{lemma*}{Lemma}
\newtheorem*{corollary*}{Corollary}
\newtheorem*{observation*}{Observation}
\newtheorem*{example*}{Example}
\newtheorem*{assumption*}{Assumption}
\tikzset{circle split part fill/.style  args={#1,#2}{%
 alias=tmp@name, 
  postaction={%
    insert path={
     \pgfextra{% 
     \pgfpointdiff{\pgfpointanchor{\pgf@node@name}{center}}%
                  {\pgfpointanchor{\pgf@node@name}{east}}%            
     \pgfmathsetmacro\insiderad{\pgf@x}
      \fill[#1] (\pgf@node@name.base) ([xshift=-\pgflinewidth]\pgf@node@name.east) arc
                          (0:180:\insiderad-\pgflinewidth)--cycle;
      \fill[#2] (\pgf@node@name.base) ([xshift=\pgflinewidth]\pgf@node@name.west)  arc
                           (180:360:\insiderad-\pgflinewidth)--cycle;            
         }}}}}  
\pgfplotsset{select coords between index/.style 2 args={
    x filter/.code={
        \ifnum\coordindex<#1\fi
        \ifnum\coordindex>#2\fi
    }
}}
\tikzset{
    position/.style args={#1:#2 from #3}{
        at=(#3), anchor=#1+180, shift=(#1:#2)
    }
}
\theoremstyle{definition}
\newtheorem{definition}{Definition}
\newtheorem{remark}[definition]{Remark}
\theoremstyle{plain}
\newtheorem{proposition}[definition]{Proposition}
\newcommand{\id}{\mbox{Id}}
\newcommand{\code}[1]{\texttt{#1}}
\DeclareMathOperator*{\argmin}{arg\,min}
\title{Approximating the Stationary Bellman Equation by Hierarchical Tensor Products}
\author{Mathias Oster \and 
  Leon Sallandt \and Reinhold Schneider}
\date{Received: date / Accepted: date}
\title{Approximating the Stationary Bellman Equation by Hierarchical Tensor Products}
\author{Mathias Oster\thanks{TU BERLIN} \and 
  Leon Sallandt \and Reinhold Schneider}
\author{
  Mathias Oster \\
            Technische Universit\"at Berlin\\
            Strasse des 17. Juni 135 \\
            10623 Berlin, Germany \\
  \texttt{oster@math.tu-berlin.de} \\
  %% examples of more authors
   \And
  Leon Sallandt\\
            Technische Universit\"at Berlin\\
            Strasse des 17. Juni 135 \\
            10623 Berlin, Germany \\
  \texttt{sallandt@math.tu-berlin.de} \\
  \And
  Reinhold Schneider\\
            Technische Universit\"at Berlin\\
            Strasse des 17. Juni 135 \\
            10623 Berlin, Germany \\
  \texttt{schneidr@math.tu-berlin.de} \\
}
\date{Received: date / Accepted: date}
\title{Approximating the Stationary Bellman Equation by Hierarchical Tensor Products}
\author{Mathias Oster \and 
  Leon Sallandt \and Reinhold Schneider}
\date{Received: date / Accepted: date}
\titlerunning{Approximating the Stationary Bellman Equation}
\institute{
    Mathias Oster \at
            Technische Universit\"at Berlin, Strasse des 17. Juni 135, 10623 Berlin, Germany \\
    \email{\textbf{oster@math.tu-berlin.de}}\\
    ORCiD: 000-0001-6603-7883
              \and
          Leon Sallandt \at
            Technische Universit\"at Berlin, Strasse des 17. Juni 135, 10623 Berlin, Germany \\
          \email{sallandt@math.tu-berlin.de} \\
          ORCiD: 0000-0002-7102-7767
            \and
            Reinhold Schneider \at
            Technische Universit\"at Berlin, Strasse des 17. Juni 135, 10623 Berlin, Germany \\
            \email{schneidr@math.tu-berlin.de}
}
\begin{document}
\maketitle

\ifnum\switch=2

    \bibliographystyle{plain}
\fi

\ifnum\switch=1
   \bibliographystyle{spbasic}
\fi

\begin{abstract}
We treat infinite horizon optimal control problems by solving  the associated stationary Bellman equation numerically to compute the value function and an optimal feedback law. The dynamical systems under consideration are spatial discretizations of non linear parabolic partial differential equations (PDE), which means that the Bellman equation suffers from the curse of dimensionality. Its non linearity is handled by the Policy Iteration algorithm, where the problem is reduced to a sequence of linear equations, which remain the computational bottleneck due to their high dimensions.
We reformulate the linearized Bellman equations via the Koopman operator into an operator equation, that is solved using a minimal residual method.
Using the Koopman operator we identify a preconditioner for operator equation, which deems essential in our numerical tests.
To overcome computational infeasability we use low rank hierarchical tensor product approximation/tree-based tensor formats, in particular  tensor trains (TT tensors) and 
multi-polynomials, together with high-dimensional quadrature, e.g. Monte-Carlo.

By controlling a destabilized version of viscous Burgers and a diffusion equation with unstable reaction term numerical evidence is given.
    \keywords{Dynamic Programming, Feedback control, Hamilton-Jacobi-Bellman, variational Monte-Carlo, Tensor Product Approximation}
\end{abstract}
\section{Introduction}
In optimal control theory finding a feedback law enables us to get a robust online control for dynamical systems. One prominent approach to find an optimal feedback law is calculating the value function, 
which can be done by solving either the Bellman equation or the Hamilton-Jacobi-Bellman equation. Popular numerical solutions to this problem are semi-Lagrangian methods \citep{SemiLagranigian,SemiLagrangianStochastic,Falcone1987}, Domain splitting algorithms \citep{FALCONESplitting}, variational iterative methods \citep{VIM}, data based methods with Neural Networks \citep{DataHJB} or Policy Iteration with Galerkin ansatz \citep{Beard,pol_approx_kunisch}.

The dynamical systems under consideration are spatial discretizations of non linear parabolic partial differential equations (PDE). The dimension of the HJB and the Bellman equation equals the size of the spatial discretization of the PDE, which in theory is infinite and in practice is extremely high.
Two principal difficulties are the non linearity and that it may be posed in high spatial dimensions. One can address the non linearity by the Policy Iteration. Fixing a policy reduces the non linear Bellman equation to a linear equation. The policy is updated by an optimality condition. The solution of the linearized Bellman is the remaining numerical bottleneck and is equivalent to a linear hyperbolic PDE, a linearized HJB equation.
We stress that in order to synthesize the optimal feedback control a function representation of the value function is needed.
Indeed, it is not sufficient to have point-evaluations of the value function.

Most methods for the numerical solution of hyperbolic PDEs struggle with the curse of dimensionality, i.e. the exponential growth of complexity with respect to the dimension of the underlying dynamical system. To alleviate this problem different methods have been proposed, like combinations of Proper Orthogonal Decomposition (POD) and semi-Lagrangian methods \citep{FalconePOD}, POD and tree structures \citep{ALLA2020192}, efficient polynomial Galerkin approximation and model reduction \citep{pol_approx_kunisch} or, recently, tensor based approaches \citep{horowitz2014linear, TensorKunisch}.

By using the Koopman/composition operator our approach transforms the linearized Bellman equation into an operator equation.
A solution is approximated via a Least-Squares/minimal residual method on a finite dimensional ansatz space, e.g. multi dimensional polynomials.
Even for ordinary differential equation (ODE) systems with considerably few variables this ansatz space becomes huge.
By solving the Least-Squares problem on the non linear manifold of Tensor Trains with fixed ranks we reduce the complexity  from an exponential to a polynomial dependency on the dimensions of the underlying system.
Tensor trains are particular cases of hierarchical (Tucker) or tree-based tensor representations \citep{Hackbusch-buch,Bachmayr-Uschmajew-Schneider}.
In principle, the Least-Squares method requires the evaluation of high-dimensional integrals, which is practically infeasible.
Therefore, the integration must be replaced by numerical quadrature.
Monte-Carlo and quasi Monte-Carlo methods are a canonical choice, since they do not suffer from the curse of dimensionality.
We call the resulting discrete approach variational Monte-Carlo (VMC) \citep{VMC}.
It turns out that the Koopman operator can be evaluated point-wise in certain quadrature points by computing trajectories of the underlying dynamical system.
In contrast to the linearized HJB the linearized Bellman equation can be solved model-free, i.e. without explicit knowledge of the underlying dynamical system. Finally, we remark that the present treatment of high-dimensional operators in the tensor setting differs essentially from direct treatments as in  \cite{Bachmayr-Uschmajew-Schneider, Hackbusch-buch}.
In there, the underlying partial differential operator has an explicit representation or approximation in a low rank tensor form. However, this compromises the class of treatable problems
severely. By using VMC we circumvent such a representation. 

Analogously to \cite{kundu} one can also incorporate control constraints in terms of projection operators. The generalization of the present approach to 
stochastic control problems and finite horizon problems  is relatively straightforward and should be discussed in forthcoming papers. In particular, there is a stochastic counterpart of the deterministic Koopman operator, which is the semi group generated by the backward Kolmogorov operator \citep{PerronFrobeniusKoopmanSchuette, LasotaMackey, KoopmanofRandomDynSys}.  We would like to mention recent groundbreaking progress in the treatment of backward Kolmogorov equations by means of 
deep neural networks by \citep{grohs-jensen} and further papers of these authors. 
One reason we have restricted ourselves to the deterministic  setting is that we can compare our predicted costs with computable reference values.

The treatment of the linearized Bellman equation is the same as in dynamical programming with continuous states and controls. We highlight that the adjoint of the Koopman operator corresponds to the matrix of transition probabilities in Dynamical Programming/Markov decision processes \citep{Bertsekas}. 

% Model-free controlling in dynamical programming is the basis of Reinforcement Learning. One aim of the paper is to provide a bridging concept to the field of Reinforcement Learning, from which we have taken many inspirations \citep{deep_mind}. Our method allows for straightforward generalizations to model-free controlling and is very flexible to additional regularization constraints or change of basis functions. 

As in \citep{TensorKunisch}, the proposed approach uses a tensor product ansatz to find a good polynomial approximation of low
computational complexity and the linearized HJB equation is solved directly by a Galerkin (instead of Least-Squares) approximation and the high-dimensional integration is performed by TT-cross algorithms (instead of Monte-Carlo methods).

Let us remark that the present approach and particularly  the variational Monte-Carlo (or Least-Squares/minimal residual)
method  can be used for other tools in high-dimensional approximation as well. In fact, this approach coincides with the empirical risk minimization (ERM) in statistical learning \citep{steinwart}. Instead of tensor products, established methods from statistical learning can be applied to treat the linearized Bellman equation, for example (polynomial) kernel methods (support vector machine - SVM) \citep{steinwart,schoelkopf}, deep neural networks \citep{goodfellow} from machine learning, 
or sparse polynomials \citep{azmi2020data}, which are also used in uncertainty quantification \citep{schwabetal}.

We perform several numerical tests to demonstrate the numerical evidence for certain model problems. We consider a destabilized viscous Burgers equation and a diffusion equation with unstable reaction term. We compare the performance of our controller with a simple linear quadratic controller and an optimal open-loop controller for different initial states.

The rest of the paper is organized as follows.
In the preliminaries section $2$ we introduce the optimal control problem, the Koopman operator and the linearized Bellman equation.
% Alternating between these two equations provides the underlying Policy Iteration, stated in Algorithm \ref{algo:pol_it_informal} informally.
Section $3$ is devoted to the Policy Iteration in a minimal residual treatment.
We consider the discretization of the minimal residual problem and formulate it in Monte-Carlo quadrature.
First error estimates on the variational Monte-Carlo approximation are formulated.
In Section $4$ and $5$ we introduce the manifold of tensor trains with fixed TT-rank as our ansatz space and specify the numerical routines that are used to optimize on this set.
In the final section we describe our numerical experiments.

\section{Optimal control and the Bellman equation}
In order to find an optimal feedback law, we calculate the value function of our optimal control problem. 
We show that the Bellman equation has a natural representation as an operator equation via the Koopman operator in a very general context.
\subsection{Control problem setting}
We consider a (potentially high-dimensional) ODE in $\mathbb R^d$. For $\gamma \geq 0$ we want to minimize the cost functional over $u \in  L^2([0,\infty), \mathbb R^m)$
\begin{align}
    \label{eq::infinitehorizon}
    \mathcal{J}(x,u) &= \int_{0}^{\infty} e^{-\gamma t}\big[c(y(t)) + u(t)^T B u(t) \big] dt,  \\
    \dot{y}(t) &=  f(y(t),u(t)),\label{eq::dynsyst} \\
    y(0) &= x \notag
\end{align}
where $c:\mathbb R^d \to \mathbb{R}_{\geq 0}$ is some non negative, coercive,  Lipschitz continuous cost function  with $c(0) = 0$ and $B\in \mathbb R^{m,m}$ is positive definite. For initial data $x \in \mathbb R^d$ and fixed control $u(\cdot)$, we denote by $y^x(t, u) \in \mathbb R^d$ the evaluation of the trajectory at time $t$. If the context is clear we just write $y(t)$. We further assume, that $f$ can be written as 
$$f(y(t), u(t)) = \tilde{f}(y(t)) +g(y(t)) u(t),$$
where $\tilde{f}: \mathbb R^d \to \mathbb R^d$ and $g: \mathbb R^d \to \mathbb R^{d,m}$ are smooth (non linear) functions.
We define the  value function as infimum of the cost functional over all controls 
\[ v^*(x):=\inf_{u \in L^2([0,\infty), \mathbb R^m)} \mathcal J(x, u) \text{ for all } x \in \mathbb R^d. \]
For a feedback law $\alpha: \mathbb R^d \to \mathbb R^m$ we denote the closed-loop system as
\begin{equation}\label{eq:closed_loop}
    \dot y = \tilde f(y)+g(y)\alpha(y), \quad y(0) = x.
\end{equation}
We define for fixed $\tau$ the flow $\Phi_\tau^\alpha:\mathbb R^d\to\mathbb R^d$ such that $\Phi_\tau^\alpha(x)$ is the evaluation of the trajectory at time $\tau$ with initial state $x$ w.r.t \eqref{eq:closed_loop}. In the following, we refer to the case $\gamma > 0 $ as the discounted and to $\gamma = 0$ as the undiscounted problem. We write 
\[r(y(t),u(t)) := c(y(t)) + u(t)^T B u(t).\]
\vspace{-.5cm}

\begin{lemma}\label{lem:flow}
Let $\alpha:\mathbb R^d\to \mathbb R^m$ be Lipschitz. Then the flow $\Phi_\tau^\alpha$ of the dynamical system \eqref{eq:closed_loop} is well defined and Lipschitz continuous.
\end{lemma}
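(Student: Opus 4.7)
The plan is to decompose the statement into (i) well-posedness of the initial value problem \eqref{eq:closed_loop} and (ii) Lipschitz continuity of $x \mapsto \Phi_\tau^\alpha(x)$, both of which follow from standard ODE theory once the right-hand side is shown to be (at least) locally Lipschitz.

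First I would set $F(y) := \tilde f(y) + g(y)\alpha(y)$ and verify that $F$ is locally Lipschitz on $\mathbb R^d$. Since $\tilde f$ and $g$ are smooth they are $C^1$, hence locally Lipschitz and bounded on bounded sets; $\alpha$ is Lipschitz by assumption and therefore also bounded on bounded sets. On any compact $K \subset \mathbb R^d$ the elementary product rule for Lipschitz functions then gives a constant $L_K$ with $|F(y_1)-F(y_2)| \leq L_K |y_1 - y_2|$ for $y_1,y_2 \in K$. Picard--Lindel\"of applied to $\dot y = F(y)$, $y(0)=x$, yields a unique maximal solution through every initial value.

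Second I would promote this to global existence on $[0,\infty)$. Under the (implicit) assumption that $\tilde f$ and $g$ grow at most linearly, Gr\"onwall's inequality applied to $\tfrac{d}{dt}|y(t)|^2 = 2\langle y(t), F(y(t))\rangle$ gives an a priori bound $|y^x(t)| \leq (|x|+Ct)e^{Ct}$, which rules out finite-time blow-up and thus makes $\Phi_\tau^\alpha$ well-defined for every $\tau \geq 0$.

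For the Lipschitz property, fix $\tau > 0$ and a bounded set of initial data, pick $x_1,x_2$ in it, and let $y_i(t) := \Phi_t^\alpha(x_i)$. By the a priori bound the two trajectories remain in a common compact set $K'$ on $[0,\tau]$, on which $F$ has some Lipschitz constant $L$. Then
\begin{equation*}
\frac{d}{dt}|y_1(t)-y_2(t)|^2 = 2\langle y_1(t)-y_2(t),\,F(y_1(t))-F(y_2(t))\rangle \leq 2L\,|y_1(t)-y_2(t)|^2,
\end{equation*}
and Gr\"onwall yields $|\Phi_\tau^\alpha(x_1) - \Phi_\tau^\alpha(x_2)| \leq e^{L\tau}|x_1-x_2|$, which is the desired Lipschitz estimate.

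The main obstacle is not the Gr\"onwall comparison, which is routine, but the global existence step: without a growth condition on $\tilde f$ and $g$, solutions could blow up in finite time, and then $\Phi_\tau^\alpha$ would fail to be defined on all of $\mathbb R^d$. I would therefore either state a mild growth hypothesis explicitly, or note that in the applications of interest (spatial discretizations of dissipative parabolic PDEs plus Lipschitz feedback) the required a priori bound is standard and provided by the cost-driven dissipation estimates used later in the paper.
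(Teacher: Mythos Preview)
The paper states Lemma~\ref{lem:flow} without proof, treating it as a standard consequence of ODE theory. There is therefore nothing to compare your argument against; your route via local Lipschitz continuity of $F(y)=\tilde f(y)+g(y)\alpha(y)$, Picard--Lindel\"of, and a Gr\"onwall comparison is exactly the canonical argument one would supply.

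Your caveat about global existence is well placed and is in fact the one substantive point the paper glosses over. As stated, the lemma asserts that $\Phi_\tau^\alpha$ is defined on all of $\mathbb R^d$ for every $\tau$, but the standing assumptions (``$\tilde f$ and $g$ smooth'') do not guarantee this: the paper's own Test~1 has the uncontrolled drift $\tilde f(y)=\sigma A y + y^3$, for which large initial data blow up in finite time under, say, the Lipschitz feedback $\alpha\equiv 0$. So your proposed resolution---either add a linear-growth hypothesis, or restrict to feedbacks and initial data for which an a~priori bound is available (in the paper's setting, membership in $F$ and the later restriction to a compact $\Omega$ effectively play this role)---is the right move. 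You might phrase the conclusion as: for each $\tau>0$ and each bounded set $B\subset\mathbb R^d$ on which the trajectories exist up to time $\tau$, the map $x\mapsto\Phi_\tau^\alpha(x)$ is Lipschitz on $B$ with constant $e^{L\tau}$; this is all that is actually used downstream.
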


In the following we only consider feedback laws that give us finite costs. Thus, we define the policy evaluation function $v^\alpha$ with respect to a feedback law $\alpha$ as
\begin{equation}\label{eq:v_alpha}
    v^\alpha(x) :=   \int_{0}^{\infty} e^{-\gamma t}\big[c(\Phi_t^\alpha(x)) + \alpha(\Phi_t^\alpha(x))^T B \alpha(\Phi_t^\alpha(x)) \big] dt.
\end{equation} 
Then we can define the set of feedback laws that induce finite costs 
$$F = \{ \alpha:\mathbb R^d \to \mathbb R^m\ |\ v^\alpha(x)<\infty \text{ for all } x \in \mathbb R^d \text{ and } \alpha \text{ Lipschitz}\}.$$
The following assumptions ensure that there exists a feedback law, where the policy evaluation function is the value function and that $v^*(0) = 0$.
	\begin{enumerate}[label=\textbf{(A.\arabic*)}]
		\item There exists an optimal Lipschitz continuous feedback control with finite costs for any initial state $x\in \mathbb R^d$. \label{assum_2}
		\item For all $\alpha \in F$ we assume $v^\alpha $ is $C^1(\mathbb R)$ with Lipschitz derivative.
		\item The origin is a steady state of the uncontrolled dynamical system, i.e. $\tilde f(0)=0$.
	\label{as:assum_2}
%		\item The control system $f$ is exponentially stabilizable.
	\end{enumerate}

\begin{remark}
The assumption that there exists an optimal Lipschitz feedback control is very strong. The works \citep{Gota,RINCONZAPATERO2012305, BREITEN20191361,BreitenStokes} give some examples, when this is the case. In \cite{schaft} local smoothness of the value function and policy evaluation function for smooth systems is shown.
We use these assumptions to ensure that the flow is well-defined.
If this can be guaranteed in another way, these assumptions can be dropped.
\end{remark}
\subsection{The Bellman equation}
The value function obeys the Bellmann equation.
\begin{theorem} Consider $x\in \mathbb R^d$. Then for any $\tau\in [0,\infty)$ we have:
\begin{equation}\label{BellmannsPrinciple}
v^*(x) = \min_{\alpha\in F} \lbrace \int_0^\tau e^{-\gamma t}r(\Phi_t^\alpha(x), \alpha(\Phi_t^\alpha(x))) dt + e^{-\gamma \tau}v^*\circ \Phi_\tau^\alpha(x)\rbrace.
\end{equation}
\end{theorem}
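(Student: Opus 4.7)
The plan is to prove the two inequalities $\leq$ and $\geq$ separately, using assumption \ref{assum_2} to guarantee an optimal Lipschitz feedback and exploiting the semigroup property of the flow $\Phi_t^\alpha$.

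As a preparatory step I would record the semigroup identity $\Phi_{t+\tau}^\alpha(x)=\Phi_t^\alpha(\Phi_\tau^\alpha(x))$ for all $\alpha\in F$ and $t,\tau\geq 0$. This is immediate from Lemma \ref{lem:flow}: both sides satisfy the same Lipschitz ODE \eqref{eq:closed_loop} with the same value $\Phi_\tau^\alpha(x)$ at time $0$, so uniqueness gives the identity. With this in hand, for any $\alpha\in F$ the splitting $\int_0^\infty = \int_0^\tau +\int_\tau^\infty$ in \eqref{eq:v_alpha}, combined with the change of variables $s=t-\tau$ in the tail, gives the fundamental identity
\[
v^\alpha(x)=\int_0^\tau e^{-\gamma t}r(\Phi_t^\alpha(x),\alpha(\Phi_t^\alpha(x)))\,dt+e^{-\gamma\tau}v^\alpha(\Phi_\tau^\alpha(x)).
\]

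For the direction $\geq$, I would pick by \ref{assum_2} an optimal feedback $\alpha^\star\in F$ with $v^{\alpha^\star}(x)=v^*(x)$. Applying the identity above to $\alpha^\star$ and using $v^{\alpha^\star}\geq v^*$ pointwise yields
\[
v^*(x)\geq \int_0^\tau e^{-\gamma t}r(\Phi_t^{\alpha^\star}(x),\alpha^\star(\Phi_t^{\alpha^\star}(x)))\,dt+e^{-\gamma\tau}v^*(\Phi_\tau^{\alpha^\star}(x)),
\]
which is bounded below by the minimum over $\alpha\in F$ on the right-hand side of \eqref{BellmannsPrinciple}.

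For the direction $\leq$, fix $\alpha\in F$. By \ref{assum_2} applied at the state $\Phi_\tau^\alpha(x)$, there is an optimal feedback $\beta\in F$ with $v^\beta(\Phi_\tau^\alpha(x))=v^*(\Phi_\tau^\alpha(x))$. I would concatenate the two closed-loop controls into the open-loop control $u(t)=\alpha(\Phi_t^\alpha(x))$ for $t\in[0,\tau]$ and $u(t)=\beta(\Phi_{t-\tau}^\beta(\Phi_\tau^\alpha(x)))$ for $t>\tau$; Lemma \ref{lem:flow} ensures the resulting trajectory is well-defined and coincides with $\Phi_t^\alpha(x)$ on $[0,\tau]$ and with $\Phi_{t-\tau}^\beta(\Phi_\tau^\alpha(x))$ afterwards, while finiteness of $v^\alpha$ and $v^\beta$ yields admissibility. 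Evaluating $\mathcal{J}(x,u)$ and applying the identity to $\beta$ in reverse gives
\[
v^*(x)\leq \mathcal{J}(x,u)=\int_0^\tau e^{-\gamma t}r(\Phi_t^\alpha(x),\alpha(\Phi_t^\alpha(x)))\,dt+e^{-\gamma\tau}v^*(\Phi_\tau^\alpha(x)).
\]
Taking the infimum over $\alpha\in F$ delivers the other inequality; combined with the $\geq$ direction this shows the infimum is attained (by $\alpha^\star$), so it is a minimum.

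The main obstacle is the $\leq$ step: one must carefully justify that the concatenated control is measurable, square-integrable, and that the composed trajectory really equals the two flow pieces glued at $\tau$, so that $\mathcal{J}(x,u)$ decomposes as claimed. The $\geq$ step, by contrast, reduces to the semigroup identity and the definition of $v^*$. All other manipulations are routine bookkeeping relying on Lemma \ref{lem:flow} and the integrability built into the definition of $F$.
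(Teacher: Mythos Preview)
The paper does not supply its own proof of this theorem; it simply states that ``the Bellman equation holds in a very general context, which includes our setting'' and cites \cite{DaLio,DeterministicHJB,Barbu,LiYongOptContrInfDim}. So there is nothing in the paper to compare your argument against line by line.

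Your proposal is the standard dynamic-programming proof and is essentially correct. The semigroup identity for $\Phi^\alpha$, the splitting of $v^\alpha$, the use of \ref{assum_2} to produce an optimal $\alpha^\star$ for the $\geq$ direction, and the concatenation argument for the $\leq$ direction are exactly the ingredients one expects. You have also correctly identified the only point that requires care: in the $\leq$ step one must verify that the glued open-loop control is admissible, i.e.\ lies in $L^2([0,\infty);\mathbb{R}^m)$. The piece on $[0,\tau]$ is trivially square-integrable because $\alpha$ is Lipschitz and $t\mapsto\Phi_t^\alpha(x)$ is continuous on a compact interval; for the tail one needs that the optimal feedback $\beta$ produces an $L^2$ control from the state $\Phi_\tau^\alpha(x)$, which is implicit in reading \ref{assum_2} as ``$\beta$ attains the infimum over $L^2$ controls''. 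With that reading the argument closes. In the discounted case $\gamma>0$ you should be aware that finiteness of $v^\alpha$ alone does not force the associated open-loop control into $L^2$, so the admissibility of the tail really does rest on \ref{assum_2} and not merely on $\beta\in F$.
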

The Bellman equation holds in a very general context, which includes our setting \citep{DaLio,DeterministicHJB,Barbu,LiYongOptContrInfDim}.
The flow of a dynamical system allows us to lift the dynamics from the state space to the space of all functionals on the state space via a family of Koopman operators $K_t^\alpha[h](x) = h\circ \Phi^\alpha_t(x)$ for $h$ in some appropriate function class \citep{PerronFrobeniusKoopmanSchuette,AppliedKoopmanism,Koopman315}. 
Defining $r^\alpha(x) := r(x, \alpha(x))$ and $r^\alpha_t (x)= r^\alpha(\Phi_t^\alpha(x))$, the Bellman equation reads
\[ 0 = \min_{\alpha\in F} \lbrace\int_0^\tau e^{-\gamma t}r^\alpha_t(\cdot)dt + (e^{-\gamma \tau}K_\tau^\alpha-\id)[v^*](\cdot)\rbrace.\]
Notice, that for fixed feedback law $\alpha\in F$ the corresponding cost functional $v^\alpha$ satisfies a linearized Bellman-type equation, i.e.
\begin{equation}\label{eq:linear-Bellmann} 
0 = \int_0^\tau e^{-\gamma t}r^\alpha_t(\cdot)dt + (e^{-\gamma \tau}K_\tau^\alpha-\id)[v^\alpha](\cdot).
\end{equation}
%In the case of Lancaster1995AlgebraicRE}. In this case the value function is only locally integrable. Thus, we do our analysis in the very general setting in $L_{loc}^\infty(\mathbb R^d):= \{ h: \mathbb R^d \to \mathbb R : h|_M \in L^\infty(M) \text{ for all } M \subset \mathbb R^d \text{ compact} \}$. 

\begin{remark}
The advantage of the Koopman operator is that it allows to write the linearized Bellman equation as fix-point equation. In upcoming works we apply the framework of \citep{Puterman} to the convergence analysis of this continuous time, continuous state optimal control problem. 
\end{remark}

We introduce the coupled Bellmann-like equations, i.e.
\begin{align*}
         0 &= \int_0^\tau e^{-\gamma t}r^\alpha_t(x)dt + (e^{-\gamma \tau}K_\tau^\alpha-\id)[v](x),\\
  & \alpha^*(x) = -\frac{1}{2}B^{-1} g(x)^T \nabla v^*(x) .
\end{align*}

In \citep{TensorKunisch} one target is to solve the HJB \citep{DeterministicHJB} \begin{equation} \label{eq:DeterministicHJB}
  \min_{\alpha\in F} \{-\gamma v^*(x)+\nabla v^*(x)\cdot f(x,\alpha(x))+r(x,\alpha(x))\}=0.  
\end{equation}
The authors of \citep{TensorKunisch} solve this equation with policy iteration. There within they need to approximate for fixed policy $\alpha$
\begin{equation} \label{eq:linHJB}
  -\gamma v^*(x)+\nabla v^*(x)\cdot f(x,\alpha(x))+r(x,\alpha(x))=0.  
\end{equation}
using a Galerkin method.
For their algorithm it is necessary to evaluate the right hand side of the ODE $f(x, \alpha(x))$ at certain points $x \in \mathbb R^d$. In contrast to that, we take a first steps towards a model-free approach, such that we can solve optimal control problems resulting from experiments or simulations of research partners with unknown discretization of the controlled dynamical system. Thus, we transform the linearized system \eqref{eq:linHJB} using the method of characteristics to obtain an equation where a black-box solver of the ODE and the reward function $r$ can be used. In the end, the only a priori information we need are the control functions $g$ and the matrix $B$. Note that as $B$ is constant and linear it can easily be learned from data. Moreover, in many applications $g$ is constant w.r.t. $x$, again making it easy to learn it. Both of these steps can be done even before finding an optimal policy.

We finish the section by informally stating the Policy Iteration algorithm.

\begin{algorithm}[H]
\SetAlgoLined
\caption{Policy Iteration.}\label{algo:pol_it_informal}
\SetKwInOut{Input}{input}\SetKwInOut{Output}{output}
\SetKwInOut{Output}{output}\SetKwInOut{Output}{output}
\Input{Initial Policy $\alpha_0$}
\Output{An approximation of $v^*$ and $\alpha^*$, denoted by $\hat v$ and $\hat \alpha$}

Set $k = 0$.

\While{not converged}{
  Solve for $v_k$ the linear equation
\begin{equation}\label{eq:PolicyIteration1}
     0 = \int_0^\tau e^{-\gamma t}r^{\alpha_k}_t(\cdot) dt + (e^{-\gamma \tau}K_\tau^{\alpha_k}-\id)v_k(\cdot)
\end{equation}
then update the policy according to
\begin{equation}\label{eq:policy_update}
 \alpha_{k+1}(x) = -\frac{1}{2}B^{-1} g(x)^T \nabla v_k(x) .
\end{equation}
Set $k = k+1$.
}
Set $\hat v = v_{k}$ and $\hat \alpha = - \frac 1 2 B^{-1}  g^T \nabla \hat v $.
\end{algorithm}
\begin{remark}
The Koopman operator $K_\tau^\alpha$ and its infinitesimal generator $\mathcal L^\alpha$ reveal the operator theoretic interplay of the linearized HJB and Bellman equation. In the linear-quadratic case the policy iteration with \eqref{eq:linHJB} instead of \eqref{eq:PolicyIteration1} corresponds to the Newton-Kleinman method \citep{kleinman}. In the non-linear case with \eqref{eq:linHJB} instead of \eqref{eq:PolicyIteration1} there are convergence results by \cite{Puterman, Saridis}. Hence, if \eqref{eq:PolicyIteration1} is well defined in every step, this algorithm converges. However, the convergence for approximative schemes and an error analysis are only partially answered. In \citep{DeterministicHJB} the use of monotone schemes was analysed.  In \citep{Beard} Galerkin approximation was analyzed. Least-squares methods are still open as is an error analysis.
\end{remark}

\section{Approximative Policy Iteration}
To numerically solve the optimal control problem we do not calculate the value function on $\mathbb R^d$ but instead approximate it on a compact $\Omega \subset \mathbb R^d$. 
To this end, we first formulate our procedure informally.

We consider the Hilbert space $ V := L^2 (\Omega ) $, (or more generally $ V := L^2 ( \Omega, \rho ) $ with some probability density $ \rho$ and fix some $\alpha \in F$ such that the exact solution $v^{\alpha}$ to the linearized Bellman equation 
\[ 0 = \int_0^\tau e^{-\gamma t}r^{\alpha_k}_t(\cdot) dt + (e^{-\gamma \tau}K_\tau^{\alpha_k}-\id)v_k(\cdot) \]
is element of $V$.
Denote by $$\mathcal R^\alpha(v) = \|(\id -e^{-\gamma\tau}K_\tau^\alpha)v-\int_0^\tau e^{-\gamma t}r_t^\alpha(\cdot)\  dt\|^2_{V}.$$
Then $ v^{\alpha} \in V $ is a minimizer of the functional $ \mathcal{R}^{\alpha} $, i.e.
\begin{equation*}
    v^{\alpha} = \argmin_{ v \in V} \mathcal{R}^{\alpha} (v).
\end{equation*}
In particular, we have $\mathcal R^\alpha(v^\alpha) = 0$. However, finding this minimizer is infeasible and we further restrict to a subset.
Let $ U \subset V$, the {\em minimal residual approximation} (or Least-squares approximation) is defined by the minimizer
\begin{equation*}
    v^{\alpha}_U = \argmin_{ v \in U} \mathcal{R}^{\alpha}(v).
\end{equation*}
Classically, $U \subset V$ is a finite dimensional subspace. However, in many applications the subspace $U$ is high-dimensional, which makes computations impracticable. Thus, we allow $\mathcal{M} \subset U$ to be a compact subset
feasible for computational treatment having an intrinsic data complexity, which can be handled
by our technical equipment. In our case $\mathcal{M}$ is the set of tensor trains of bounded (multi-linear) rank
$\mathbf{r} = (r_1, \dots, r_n)$ and uniformly bounded norm embedded in the space $U\subset V$ of tensor product polynomials of degree $p$. We will introduce this function class in Section \ref{sect:TT}.

However, the numerical treatment of the above 
minimization problem is still infeasible due to the presence of the high-dimensional integrals over $\Omega \subset \mathbb{R}^d$. To handle this problem  we replace the exact integral by a numerical quadrature, e.g.
uniformly distributed Monte-Carlo integration.
\begin{equation*}
    v^{\alpha}_{(\mathcal{M}, N) } \in \argmin_{ v \in \mathcal M} \mathcal{R}_N^{\alpha}(v) , \ 
  \mathcal R_N^\alpha(v) =  \frac{1}{N} \sum_{i=1}^N |(\id -K_\tau^\alpha)v(x_i)-\int_0^\tau r_t^\alpha(x_i)dt|^2 .  
\end{equation*}
Note that this minimization problem is computable now because the Koopman operator can be evaluated pointwise using a black-box ODE solver.
\begin{remark}
Recall that in contrast to our method in \citep{TensorKunisch} the linearized HJB \eqref{eq:linHJB} is solved directly using a Galerkin scheme. The numerical integration is done by the TT-cross algorithm. 
\end{remark}

In the following we consider the approximative Policy Iteration

\begin{algorithm}[H]
\SetAlgoLined
\caption{Approximative Policy Iteration.}\label{algo:pol_it}
\SetKwInOut{Input}{input}\SetKwInOut{Output}{output}
\SetKwInOut{Output}{output}\SetKwInOut{Output}{output}
\Input{Initial Policy $\alpha_0\in F$}
\Output{An approximation of $v^*$ and $\alpha^*$, denoted by $\hat v$ and $\hat \alpha$}
Set $k = 0$.

\While{not converged}{
  Solve the linear equation
\begin{equation}\label{PolicyIteration1}
     v_k \in \argmin_{v\in \mathcal M} \mathcal R_N^{\alpha_k}(v)
\end{equation}
then update the policy according to
\begin{equation}\label{policy_update}
   \alpha_{k+1}(x) = -\frac{1}{2}B^{-1} g(x)^T \nabla v_k(x).
\end{equation}
Set $k = k+1$.
}
Set $\hat v = v_{k}$ and $\hat \alpha = - \frac 1 2 B^{-1}  g^T \nabla \hat v $.
\end{algorithm}
\begin{remark}
    Solving the linearized Bellman equation inexactly breaks the arguments of the convergence proofs in \cite{Puterman,Saridis}. A generalization of the convergence proofs to an approximative form as in Algorithm \ref{algo:pol_it} is non trivial and part of future research.
    However, as $\mathcal M$ consists of smooth functions, the well-definedness of the flow is guaranteed.
\end{remark}
% Replacing the integral in the minimal residual formulation by a surrogate functional we are committing 'variational crimes'. The total error 
% can be decomposed into an approximation error
% of the minimal residual method 
% and an additional quadrature error, in statistics called generalization error. 
Due to the randomness of the Monte-Carlo quadrature, we cannot expect uniform or worst case error estimates. Instead, one can show 
under appropriate assumptions that an error estimate holds with high probability and decays exponentially with the number of quadrature points $N$,
see e.g. \citep{steinwart,VMC} and references therein.

If we have $\Omega\subset \Phi^\tau_\alpha(\Omega)$ the Koopman operator $K_\tau^\alpha:L^p(\Omega)\to L^p(\Omega)$ is a bounded linear operator, see \citep{CompositionOpLp}. Together with the differentiability of our cost functional,  we can give some bounds on errors in probability \citep{VMC}.

\begin{proposition}\label{prop:vmc} 
Let $\varepsilon>0$
. Then 
\[ \mathbb P[|\mathcal R^\alpha(v_{\mathcal M}^\alpha)-\mathcal R^\alpha(v^\alpha_{(\mathcal M,N)})|> \varepsilon] \leq c_1(\varepsilon, \mathcal M, \alpha) e^{-c_2(\alpha) N \varepsilon^2} \]
with $c_1(\varepsilon, \mathcal M, \alpha), c_2(\alpha)>0$.
\end{proposition}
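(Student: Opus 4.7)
The plan is to bootstrap from a pointwise (in $v$) concentration inequality to a uniform bound via a covering argument, and then convert this uniform control on $|\mathcal R^\alpha - \mathcal R_N^\alpha|$ into a bound on the gap between the true and the empirical minimiser. This is the standard scheme used in statistical learning theory for empirical risk minimisation, as cited via \citep{steinwart,VMC}.

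First I would exploit optimality in the textbook ERM fashion. Since $v_{\mathcal M}^\alpha$ minimises $\mathcal R^\alpha$ over $\mathcal M$, the difference $\mathcal R^\alpha(v_{(\mathcal M,N)}^\alpha) - \mathcal R^\alpha(v_{\mathcal M}^\alpha)$ is non-negative. Adding and subtracting the empirical residual and using that $v_{(\mathcal M,N)}^\alpha$ minimises $\mathcal R_N^\alpha$ on $\mathcal M$, i.e. $\mathcal R_N^\alpha(v_{(\mathcal M,N)}^\alpha)\le \mathcal R_N^\alpha(v_{\mathcal M}^\alpha)$, one obtains
\[
0 \le \mathcal R^\alpha(v_{(\mathcal M,N)}^\alpha) - \mathcal R^\alpha(v_{\mathcal M}^\alpha) \le 2\sup_{v \in \mathcal M}\bigl|\mathcal R^\alpha(v) - \mathcal R_N^\alpha(v)\bigr|.
\]
This reduces the claim to a uniform concentration inequality for $\mathcal R_N^\alpha$ on $\mathcal M$.

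Next I would establish a pointwise Hoeffding bound. For fixed $v\in\mathcal M$ and i.i.d.\ quadrature nodes $x_i$ set
\[
Z_i(v) := \Bigl|(\id-e^{-\gamma\tau}K_\tau^\alpha)v(x_i)-\int_0^\tau e^{-\gamma t}r_t^\alpha(x_i)\,dt\Bigr|^2,
\]
so that $\mathcal R_N^\alpha(v)=N^{-1}\sum_i Z_i(v)$ and $\mathbb E[Z_i(v)]=\mathcal R^\alpha(v)$. Because $\mathcal M$ consists of tensor trains of bounded rank and uniformly bounded norm inside the finite-dimensional polynomial space $U$, and $\Omega$ is compact, the functions $v$ are uniformly bounded in $L^\infty(\Omega)$. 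Combined with the $L^p$-boundedness of $K_\tau^\alpha$ (\citep{CompositionOpLp}) and the Lipschitz/coercivity properties of $r$ and $\alpha$, we obtain a deterministic bound $\|Z_i(v)\|_\infty\le M(\mathcal M,\alpha)$. Hoeffding's inequality then yields
\[
\mathbb P\bigl[|\mathcal R^\alpha(v) - \mathcal R_N^\alpha(v)| > \tfrac{\varepsilon}{2}\bigr] \le 2\exp\!\bigl(-\tilde c(\alpha)\,N\varepsilon^2\bigr).
\]

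Finally I would upgrade this to a uniform statement by an $\eta$-net argument. Compactness of $\mathcal M$ inside the finite-dimensional ambient space $U$ gives, for every $\eta>0$, a finite covering number $\mathcal N(\mathcal M,\eta)$. The map $v\mapsto \mathcal R^\alpha(v)-\mathcal R_N^\alpha(v)$ is Lipschitz on $\mathcal M$ (uniformly in the sample), since it is the difference of two quadratic functionals in $v$ with derivatives bounded through the $L^\infty$-bound above. Choosing $\eta$ proportional to $\varepsilon$ and applying the union bound to the pointwise estimate gives
\[
\mathbb P\Bigl[\sup_{v\in \mathcal M}|\mathcal R^\alpha(v)-\mathcal R_N^\alpha(v)|>\tfrac{\varepsilon}{2}\Bigr] \le 2\mathcal N(\mathcal M,\eta(\varepsilon))\exp\!\bigl(-\tilde c(\alpha) N\varepsilon^2\bigr),
\]
from which the claim follows with $c_1(\varepsilon,\mathcal M,\alpha)=2\mathcal N(\mathcal M,\eta(\varepsilon))$ and $c_2(\alpha)=\tilde c(\alpha)/4$. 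The main obstacle is the verification that all quantities entering the Hoeffding and Lipschitz estimates are genuinely uniformly bounded over $\mathcal M$: the $L^p$-boundedness of $K_\tau^\alpha$ supplied by \citep{CompositionOpLp} is only a norm bound, so one really needs to invoke the $L^\infty$-compactness of $\mathcal M$ (bounded rank, bounded norm, compact $\Omega$, polynomial ambient space) to turn it into a pointwise bound, with the constants $c_1,c_2$ inheriting their dependence on $\mathcal M$ and $\alpha$ precisely through this step.
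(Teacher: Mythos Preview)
Your argument is correct and is essentially the standard empirical-risk-minimisation scheme (optimality decomposition, pointwise Hoeffding, covering/union bound) that underlies the results the paper invokes; the paper's own proof simply cites Theorem~4.12 and Corollary~4.19 of \cite{VMC} rather than spelling these steps out. The covering-number discussion immediately following the proposition in the paper confirms that the authors have exactly your mechanism in mind.
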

\begin{proof}
We can apply Theorem 4.12 and Corollary 4.19 from~\citep{VMC}.
\end{proof}
We shortly comment on the behavior of the constant $c_1(\varepsilon, \mathcal M)$ from Proposition \ref{prop:vmc}. To this end we have to analyze the behavior of the constant $c_1$ with respect to the dimension of the ambient space, which is in this case the tensor product of one-dimensional polynomials $U$, where the set of tensor trains $\mathcal M$ is embedded into. Note that the constant $c_1$ is proportional to the covering number of the considered set.
The covering number $\nu(\mathcal M, \varepsilon)$ of a subset $\mathcal M \subset U$ is defined as the minimual number of $\| \cdot \|_{L^2(\Omega)}$-open balls of radius $\varepsilon$ needed to cover $U$. For the open ball in the full ambient space $U$ this can be estimated as $\nu(B_1(0), \varepsilon) = \mathcal O((1/\varepsilon)^{p^n})$. However, if we intersect the open ball with the tensor manifold of fixed ranks, the covering number can be estimated as $\mathcal O ((n/\varepsilon)^{p r^2 n})$ \citep[Lemma 3]{RAUHUT2017220}. Thus, Proposition \ref{prop:vmc} proves a $\mathcal O(\log(n) n)$ dependence of the number of samples $N$ on the dimension $n$ of the underlying space.

\begin{remark}
\label{rem:bounded_inverse}
Proposition \ref{prop:vmc} measures the error in terms of $\mathcal R^\alpha$. Note that under stronger assumptions one can give an estimate in terms of $\|\cdot \|_{L^2(\Omega)}$, i.e.

\[ \mathbb P[\|v^\alpha-v^\alpha_{(\mathcal M,N)}\|_{L^2(\Omega)}> \varepsilon] \leq c_1(\varepsilon, \mathcal M) e^{-c_2(\alpha) N \varepsilon^2}. \]
Further analysis of the Koopman operator is necessary to show this bound, in particular that there exists a continuous inverse of $I - K_\tau^\alpha$ in an appropriate function space. Note that choosing the discount $\gamma$ such that $e^{-\gamma\tau} \|K_\tau^\alpha\|_{\mathcal L(L^p(\Omega))}<1$ guarantees the existence of a bounded inverse by the Neumann series \citep{CompositionOpLp}.
\end{remark}
\begin{remark}

We call the discretization described above Variational 
Monte-Carlo (VMC) \citep{VMC}, a terminology 
introduced in 
computational physics. The terminologies 
{\em discrete minimal residual method} or
{\em discrete least squares approximation}
are also appropriate. 
An improved analysis, in particular for
low rank tensor approximations, has been considered
in the paper \cite{eigel2020convergence}.
However the theory is still in its infacy.

Let us highlight the analogy of VMC with 
{\em empirical risk minimization (ERM)} in machine learning \citep{steinwart}. Indeed, ERM is a particular example for VMC. Moreover, many models in 
statistical learning can be used instead, e.g.
(deep) neural networks. The present approach is a numerical discretization method like collocation or Nystr\"om method. 

\end{remark}

\section{Tree Based Tensor Representation - Tensor Trains}\label{sect:TT}
For the approximation of the value function we define a non linear model class to circumvent the curse of dimensionality. 
To this end we choose an underlying finite dimensional subspace for the approximation of the sought value function. For the present purpose we 
take a space $\Pi_{i,n_i} = \spa \{\psi_{i_1}, \dots, \psi_{i_d} \}$ of one-dimensional polynomials of degree smaller than $n_i$  and consider the tensor product of such polynomial spaces
$$ \mathcal{V}_p := \Pi_{1,n_1} \otimes \cdots \otimes \Pi_{d,n_d}.
$$
This is a space of multivariate (tensor product) polynomials with bounded multi-degree.
Its elements $v \in \mathcal V_p$ can be represented as
\begin{equation}\label{eq:function_fulltensor}
v ( x_1 , \ldots , x_d ) = 
\sum_{i_1 , \ldots , i_d = 0}^{n_1, \dots, n_d}
c_{ i_1, \ldots , i_d} \psi_{i_1} (x_1) \cdots 
\psi_{i_d} (x_d),
\end{equation}
exhibiting that $c\in \mathbb R^{n_1, \dots, n_d}$ suffers from the curse of dimensionality.
For the sake of readibility we will henceforth write $c[i_1, \dots, i_d] = c_{i_1, \dots, i_d}$ and say that $c$ is an order $d$ tensor.

Using structured representations of polynomials like hierarchical tensor formats allows to reduce the number of parameters within the tensor $c$ \citep{Hackbusch-buch}.
More exactly, we consider a sub-manifold in $\otimes_{j=1}^d \mathbb{R}^{n_i}$
defined by multi-linear parametrizations.  
Here we use tensor trains which are a special case of a hierarchical or tree based tensor format \citep{Hackbusch-buch}. Tensor trains have been invented 
by \citep{Oseledets,Oseledets2} and applied to various high-dimensional PDE's \citep{Khoromskij-book}, but the parametrization has been used in quantum physics much earlier 
as {\em Matrix Product States} and {\em Tensor Network States} successfully for the approximation of spin systems and Hubbard model. 
For good  surveys  we refer to 
\citep{Hackbusch2014,Bachmayr-Uschmajew-Schneider,Legeza-Schneider,Hackbusch-Acta}. 
The tensor train representation  have appealing properties
making them attractive for treatment of the 
present problems, compare \citep{TensorKunisch}. For example they contain sparse polynomials, 
but are much more flexible at a price of a slightly larger overhead, see e.g. \citep{Dahmen-Bachmayr} for a comparison concerning parametric PDEs. 

The tensor train decomposition aims to represent an order $d$ tensor by a sequence of order $3$ tensors, connected by contractions.
This means that we represent $c$ by $U_1 \in \mathbb R^{n_1, r_1}$, $U_2 \in \mathbb R^{r_1, n_2, r_2}, \dots, U_{d-1} \in \mathbb R^{r_{d-2}, n_{d-1}, r_{d-1}}$ and $U_d \in \mathbb R^{r_{d-1}, n_d}$ such that
\begin{equation}\label{eq:TT_representation_coefficienttensor}
    c[i_1, \dots, i_d] = \sum_{j_1 = 1}^{r_1} \dots \sum_{j_{d-1} = 1}^{r_{d-1}} U_1[i_1, j_1] U_2 [j_1, i_2, j_2] \dots U_d[j_{d-1}, i_d].
\end{equation}
The TT-rank is introduced as the element wise smallest tuple $\mathbf r = (r_1, \dots, r_{d-1})$ such that a decomposition of the form \eqref{eq:TT_representation_coefficienttensor} exists.
The TT-rank is well defined and,
denoting $r = \max \{r_i\}$ and $n = \max \{n_i\}$,
the tensors of fixed TT-rank form a smooth manifold of dimension in $\mathcal O (dnr^2)$ \citep{TTTensor}, which means that for fixed ranks the dimension of the manifold does increase linearly with the order $d$.
Furthermore, allowing for tensors with smaller TT-rank one obtains an algebraic variety, denoted by $\mathcal M$ \citep{landsberg2012tensors, KUTSCHAN}.
Note that in particular $\mathcal M$ is closed. 
However, the numerical routines do not differentiate between the variety and the manifold. We use this approach to tackle the curse of dimensionality.
Observing that the component tensors $U_i$ are connected via a single contraction/summation to $U_{i-1}$ and $U_{i+1}$, we can represent the decomposition in a graph, by setting the components $U_i$ as nodes and indicate contractions by links between the nodes.
\begin{figure}[h!]
    \centering
    \begin{tikzpicture}
\begin{scope}[every node/.style={scale=1,minimum size=5mm, draw,  fill=white}]
    \node (A1) at (0,0) {$U_1$}; 
    \node (A2) at (2,0) {$U_2$}; 
    \node (A3) at (4,0) {$U_3$}; 
    \node (A4) at (6,0) {$U_4$}; 
    \node (C) at (-4,0) {$c$}; 
\end{scope}
    \node (D) at (-2,0) {$=$}; 
\begin{scope}[every edge/.style={draw=black,thick}]
	\path [-] (A1) edge node[midway,left] [above] {$r_1$} (A2);
	\path [-] (A3) edge node[midway,left] [above] {$r_2$} (A2);
	\path [-] (A3) edge node[midway,left] [above] {$r_3$} (A4);
	\path [-] (A1) edge node[midway,left] {$n_1$} (0,-1);
	\path [-] (A2) edge node[midway,left] {$n_2$} (2,-1);
	\path [-] (A3) edge node[midway,left] {$n_3$} (4,-1);
	\path [-] (A4) edge node[midway,left] {$n_4$} (6,-1);
	\path [-] (C) edge node[midway,left] [above] {$n_1$} (-3,0);
	\path [-] (C) edge node[midway,left] {$n_2$} (-4,1);
	\path [-] (C) edge node[midway,left] [below] {$n_3$} (-5,0);
	\path [-] (C) edge node[midway,left] [right]{$n_4$} (-4,-1);
\end{scope} 
\end{tikzpicture}
    \caption{Graphical representation of a TT representation of $c$ in four variables.}
    \label{TT_polynomial}
\end{figure}
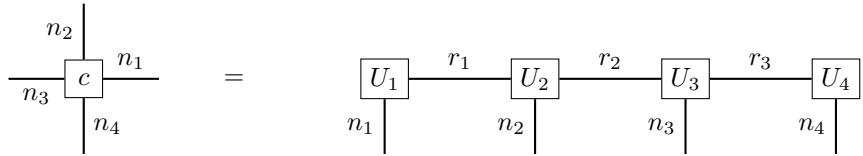
In the next step we plug the TT-decomposition of the coefficient tensor \eqref{eq:TT_representation_coefficienttensor} into the representation in \eqref{eq:function_fulltensor}.
    To this end we introduce the short form $\Psi_i(x_i) = [\psi_{i_1}(x_i), \dots, \psi_{i_d}(x_i)] \in \mathbb R^{n_i}$.
Then 
\begin{multline*}
    v(x_1,\dots,x_n) =\sum_{i_1,\dots,i_d}^{n_1,\dots,n_d} \sum_{j_1,\dots,j_{n-1}}^{r_1,\dots,r_{n-1}} U_1[i_1,j_1] U_2[k_1, i_2, k_2] \dots U_d[j_{d-1},i_d] \big(\Psi_1(x_1)\big)[i_1](\Psi_2(x_2)\big)[i_2]\cdots \big(\Psi_n(x_n)\big)[i_n],
\end{multline*}
which means that every open index of the TT representation is contracted with the one-dimensional basis functions.
The graphical representation of this tensor network is given in Figure \ref{TT_polynomial}.
\begin{figure}[h!]
    \centering
    \begin{tikzpicture}
\begin{scope}[every node/.style={scale=1,minimum size=5mm, draw,  fill=white}]
    \node (A1) at (0,0) {$U_1$}; 
    \node (A2) at (2,0) {$U_2$}; 
    \node (A3) at (4,0) {$U_3$}; 
    \node (A4) at (6,0) {$U_4$}; 
	\node [ position=-90:1 from A1](B1) {$\Psi_1(x_1)$};
   	\node [ position=-90:1 from A2](B2) {$\Psi_2(x_2)$};
   	\node [ position=-90:1 from A3](B3) {$\Psi_3(x_3)$};
   	\node [ position=-90:1 from A4](B4) {$\Psi_4(x_4)$};

\end{scope}
    \node (C) at (-4,0) {$v(x)$}; 
    \node (D) at (-2,0) {$=$}; 

\begin{scope}[every edge/.style={draw=black,thick}]
	\path [-] (A1) edge node[midway,left] [above] {$r_1$} (A2);
	\path [-] (A3) edge node[midway,left] [above] {$r_2$} (A2);
	\path [-] (A3) edge node[midway,left] [above] {$r_3$} (A4);
	
	\path [-] (A1) edge node[midway,left]  {$n_1$} (B1);
	\path [-] (A2) edge node[midway,left]  {$n_2$} (B2);  
	\path [-] (A3) edge node[midway,left]  {$n_3$} (B3);  
	\path [-] (A4) edge node[midway,left]  {$n_4$} (B4);  
\end{scope} 
              
\end{tikzpicture}
    \caption{Graphical representation of TT tensor train induced polynomial in four variables.}
    \label{TT_polynomial}
\end{figure}
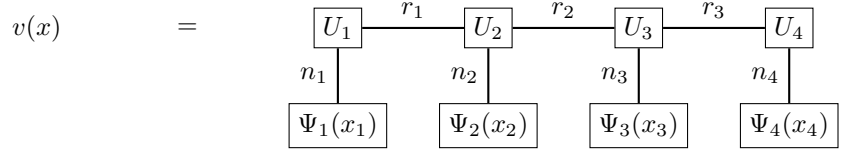
Note that any basis can be chosen for $\Psi_i$. 
In this paper we use a set of orthonormal polynomials.
In this case,  we have a Parseval formula providing a norm equivalence between the function space and the Frobenius norm of the coefficients, which guarantees stability of our representations

It turns out, that optimization procedures in this TT format can be solved by consecutively optimizing one component $U_l$ while the others are fixed. This alternating Least-Squares  (ALS) algorithm converges at least to a local minimum \citep{ALS}. To that end, we reorder the coefficients of $v$ with respect to the index $l$, i.e. we gather every index smaller than $l$ and every index larger than $l$,
\begin{align}
    v(x_1,\dots,x_n) =&\sum_{i_l}^{n_l} \sum_{j_{l-1},j_l}^{r_{j-1},r_j} U_j[j_{l-1},i_l,j_{l}]\Psi_l(x_l)_{i_l} \notag \\ 
    &\Big(\sum_{i_1, \dots i_{l-1}}^{n_1, \dots, n_{l-1}}
    \sum_{j_1, \dots, j_{l-2}}^{r_1, \dots, r_{l-2}} U_1[i_1,j_1] \dots U_{l-1}[j_{l-2},i_l, j_{l-1}]\big(\Psi_1(x_1)\big)[i_1] \dots (\Psi_{l-1}(x_{l-1})\big) [i_{l-1}]
    \label{eq:local_basis1}\\
    &\sum_{i_{l+1}, \dots i_d }^{n_{l+1}, \dots n_d} 
    \sum_{j_{l+1}, \dots, j_{d-1}}^{r_{l+1}, \dots, r_{d-1}}
    U_{l+1}[j_l, i_{l+1},j_{l+1}] \dots U_{d}[j_{d-1},i_d]\big(\Psi_{l+1}(x_{l+1})\big)[i_{l+1}] \dots (\Psi_{d}(x_{d})\big) [i_{d}]
    \Big) \label{eq:local_basis2}\\
    =&\sum_{i_{l-1},i_{l}}^{n_{l-1},n_{l}} \sum_{j_{l-1},j_{l}}^{r_{l-1},r_{l}} U_l[j_{l-1},i_l,j_{l}]\big(\Psi_l(x_l)\big)[i_l] \big(\tilde{b}^L(x_{i<l})\big) [j_{l-1}]\big(\tilde{b}^R(x_{i>l})\big)[j_{l}]. \notag
\end{align}
Note that $\tilde{b}^L(x_{i<j})\in \mathbb R^{r_{l-1}}$ represents \eqref{eq:local_basis1}, i.e. every index smaller than $l$ and $\tilde{b}^R(x_{i>j})\in \mathbb R^{r_{l}}$ represents \eqref{eq:local_basis2}, i.e. every index larger than $l$.

By doing this reordering, we see that $U_l$ can be seen as the coefficients of local basis functions that are given as the (tensor-) product of $\Psi_l$, $\tilde b^L$ and $\tilde b^R$, which form a linear space  $\mathcal V_{\text{loc}}$ of dimension $r_{l-1} \cdot n_l \cdot r_l$ . The ALS algorithm makes use of this observation, by consecutively identifying the linear spaces $\mathcal V_{\text{loc}}$.

For further clarification, we have added the graphical representation of the basis functions in Figure \ref{fig:local_basis} for the case $l=2$.
\begin{figure}[h!]
    \centering
    \begin{tikzpicture}
\begin{scope}[every node/.style={scale=1,minimum size=5mm, draw,  fill=white}]
    \node (A1) at (0,0) {$U_1$}; 
    \node (A3) at (9,0) {$U_3$}; 
    \node (A4) at (11,0) {$U_4$}; 
	\node [ position=-90:1 from A1](B1) {$\Psi_1(x_1)$};
    \node (B2) at (4,-1) {$\Psi_2(x_2)$}; 
   	\node [ position=-90:1 from A3](B3) {$\Psi_3(x_3)$};
   	\node [ position=-90:1 from A4](B4) {$\Psi_4(x_4)$};

\end{scope}
    \node (C1) at (-2,0) {$\tilde b^L(x_{i<j})$}; 
    \node (D1) at (-1,0) {$=$}; 
    \node (C2) at (2,0) {; $\Psi_l(x_l)$}; 
    \node (D2) at (3,0) {$=$}; 

    \node (C2) at (6,0) {; $\tilde b^R(x_{i>j})$}; 
    \node (D2) at (7,0) {$=$}; 

\begin{scope}[every edge/.style={draw=black,thick}]
	\path [-] (A1) edge node[midway,left] [above] {$r_1$} (1, 0);
	\path [-] (A3) edge node[midway,left] [above] {$r_2$} (8, 0);
	% \path [-] (A3) edge node[midway,left] [above] {$r_2$} (A2);
	\path [-] (A3) edge node[midway,left] [above] {$r_3$} (A4);
	
	\path [-] (A1) edge node[midway,left]  {$n_1$} (B1);
	\path [-] (4, 0) edge node[midway,left]  {$n_2$} (B2);  
	\path [-] (A3) edge node[midway,left]  {$n_3$} (B3);  
	\path [-] (A4) edge node[midway,left]  {$n_4$} (B4);  
\end{scope} 
              
\end{tikzpicture}
    \caption{Graphical representation of the local basis functions for the case $l=2$.}
    \label{fig:local_basis}
\end{figure}
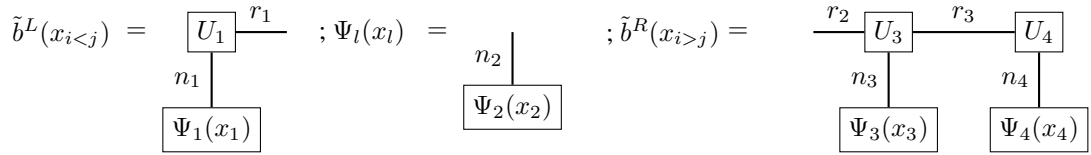

\section{Variations of the VMC equation}\label{sect:variant_vmc}
In this section we give a detailed explanation on how to solve the VMC equation \eqref{PolicyIteration1} on the manifold of Tensor Trains.
To this end, we first consider the a linear ansatz space.
We distinguish between the formulation in the function space, where we denote the loss functional by $\mathcal R_N(v)$, and the formulation in the coefficient space, where $v$ is represented by its coefficients in some appropriate basis. To this end we define $v(x) = \sum_{j = 1}^M c_j b_j(x)$, with $ b = \{ b_j \}_{j=1}^M $ is a basis of an appropriate $M$ dimensional ansatz space. We obtain an equivalent loss functional, which we denote by $\mathcal R_N(\mathbf c)$. The Least-Squares loss functional in function space is given by
\begin{align}
    \mathcal R_N(v) &= \frac 1 N\sum_{i=1}^N | v(x_i) - v(\Phi_\tau^\alpha(x_i))-\int_0^\tau r_t^\alpha(x_i)dt|^2, \notag \\
    & \approx \frac 1 N\sum_{i=1}^N |v(x_i) - v(\Phi_\tau^\alpha(x_i))-R(x_i) |^2,\label{eq:vmc_loss_functional}
\end{align} 
where $R(\cdot) \approx \int_0^\tau r_t^\alpha(\cdot) dt$ is the trapezodial or some other quadrature rule operator. The formulation of the loss functional in coefficient space is
\begin{equation}
    \label{eq::HJBCoeffMC}
    \mathcal R_N (\mathbf c) =  \sum_{i=1}^N \sum_{j=1}^M |[b_j(x_i) - b_j(\Phi(x_i)) ] c_j - R(x_i) |^2 =: \| A \mathbf c - \mathbf R \|_{\mathbb R^N}^2,
\end{equation}
with $A \in \mathbb R^{m, n}$, $a_{ij} = b_j(x_i) -b_j(\Phi(x_i))$. It is well known that the minimizer of this functional is attained by $\mathbf c$ if $A^T A \mathbf c = A^T \mathbf R$. 

We now derive some variations of this loss functional, where more information is encoded and focus on the formulation in the function space. We know that $v(0) = 0 $ and $\nabla v(0) = 0$, and thus we introduce penalty terms for constant and linear polynomials. Note that in the case of linear ansatz spaces, we could simply not include the constant and linear polynomials. However, this will later not be possible. In order to circumvent overfitting, we also add a penalty term for the norm of the value function
\[ \tilde {\mathcal R}_N(v) = \mathcal R_N(v) + \delta_1| v(0) |^2 + \delta_2| \nabla v(0)|^2 + \delta_3 \|v\|_{H^1_{\text{mix}}(\Omega)}^2. \]
where $H^1_{\text{mix}}(\Omega)$ is the tensor product of one-dimensional $H^1$ Sobolev spaces \cite{sickel2009tensor}, assuming that $\Omega$ can be written as $\Omega = \bigotimes_{i = 1}^d [a, b]$, where $a < b$.
Choosing the one-dimensional ansatz functions to be orthonormal w.r.t. $H^2(a, b)$, this regularization term is realized via Parseval's identity, by penalizing the Frobenius norm of the component tensors.
As $H^1_{\text{mix}}(\Omega)$ is continuously embedded into $L^\infty (\Omega)$ \cite{sickel2009tensor} we are penalizing the $L^\infty(\Omega)$ norm of the derivative of $v$, which prevents overfitting.

Choosing the basis $b$ to be orthonormal with respect to $H^1(\Omega)$, we can represent the last term of the loss functional in coefficient space using Parseval's identity:
\[ \|v\|_{H^1_{\text{mix}}(\Omega)}^2 = \| \mathbf c \|_F^2, \]
where $\mathbf c$ is the coefficient vector of $v$ in the basis $b$. We obtain
\begin{equation}\label{eq:least_squares_penalty}
     \tilde{\mathcal R}_N (\mathbf c)  = \| A\mathbf c - r \|^2 
     +\delta_1 \sum_{i = 1}^M (c_i b_i(0))^2
     +\delta_2 \sum_{j = 1} ^ N \sum_{i = 1} ^M (c_i \partial_{x_j} b_i(0))^2 +\delta_3 \| \mathbf c \|_F^2. 
\end{equation}
\begin{remark} 
Our cost functional is very flexible and can consist of different or additional terms. For example, one could add the absolute value of the pointwise evaluation of the HJB equation as penalty term, analogously to physical informed neural networks (PINN) \cite{RAISSI2019686}.
\end{remark}
\subsection{The VMC equation on the TT Manifold}
In the TT case with fixed ranks we do not have a linear ansatz space and thus the solution to the problem is not found as easily.
We can still fomulate $\mathcal R_N(v)$, but the coefficient representation of the loss functional demands further clarification.
The ALS algorithm solves this problem by reducing the non linear Least-Squares problem to a sequence of several small linear problems, where the above formulation is valid.
This is done by using the local basis functions, given by (\ref{eq:local_basis1}, \ref{eq:local_basis2}).
For the small problem we can use the above formulation of $\mathcal R_N(\mathbf c)$.
This is summarized in the following algorithm, c.f. \citep{ALS} for more details.
\begin{algorithm}[H]
\SetAlgoLined
\caption{Alternating Least-Squares (ALS)}\label{algo:als}
\SetKwInOut{Input}{input}\SetKwInOut{Output}{output}
\SetKwInOut{Output}{output}\SetKwInOut{Output}{output}
\Input{A TT representation with coefficient tensors $U_1, \dots, U_d$.}
\Output{Optimized coefficient tensors $U_1, \dots, U_d$.}
\While{not converged}{
    \For{$\mu = 1 \dots d-1$}{
    Identify the local basis functions as in Figure \ref{fig:local_basis} and then solve the linear equation \eqref{eq:least_squares_penalty}. 
    
    Set $U_\mu$ as the coefficients obtained by solving the linear equation.
    }
    \For{$\mu = d \dots 2$}{
    Identify the local basis functions as in Figure \ref{fig:local_basis} and then solve the linear equation \eqref{eq:least_squares_penalty}. 
    
    Set $U_\mu$ as the coefficients obtained by solving the linear equation.
    }
}
\end{algorithm}
We say that one inner for loop is a half sweep and that completing both loops is a sweep.

The algorithm is summarized as follows

\begin{algorithm}[H]
\SetAlgoLined
\caption{Basic Policy Iteration}\label{algo:basic_pol_it}
\SetKwInOut{Input}{input}\SetKwInOut{Output}{output}
\SetKwInOut{Output}{output}\SetKwInOut{Output}{output}
\Input{Initial Policy $\alpha_0\in F$}
\Output{An approximation of $v^*$ and $\alpha^*$, denoted by $\hat v$ and $\hat \alpha$}
Set $k = 0$.

\While{not converged}{
  Solve 
\begin{equation}\label{PolicyIteration1}
     v_k \in \argmin_{v\in \mathcal M} \mathcal R_N^{\alpha_k}(v)
\end{equation}
using Algorithm \ref{algo:als}.

Update the policy according to
\begin{equation}\label{policy_update}
   \alpha_{k+1}(x) = -\frac{1}{2}B^{-1} g(x)^T \nabla v_k(x).
\end{equation}
Set $k = k+1$.
}
Set $\hat v = v_{k}$ and $\hat \alpha = - \frac 1 2 B^{-1} g^T \nabla \hat v $.
\end{algorithm}

\section{A preconditioner of the linearized Bellman equation}\label{sec:precon}
In this section we consider the question of how to choose the length of the trajectory $\tau$.

To this end, we consider the linear equation
\begin{equation}\label{eq:bellman}
 (\id - e^{-\gamma \tau} K_\tau^\alpha) v = \int_0^\tau e^{-\gamma t} r_t^\alpha dt.
\end{equation}
The first arising question is the existence of an inverse operator $\id - e^{-\gamma \tau}$, which is party covered in Remark \ref{rem:bounded_inverse}.
In particular we have $\| K_\tau^\alpha \|_{\mathcal L(L^\infty(\Omega))} = 1$, which means that for any discount factor $\gamma > 0$ the operator is a contraction and thus the inverse is given by the Neumann series
\begin{equation}
    (I - e^{-\gamma \tau} K_\tau^\alpha)^{-1} = \sum_{i = 0}^\infty (e^{-\gamma \tau}K_\tau^\alpha)^i.
\end{equation}
Observing that for small discount factor this operator becomes arbitrarily bad conditioned, a good candidate for a preconditioner of \eqref{eq:bellman} is $\sum_{i = 0}^N (e^{-\gamma \tau}K_\tau^\alpha)^i$.
However, due to the semigroup property of the Koopman operator, we have
\begin{align*}
        (\sum_{i=0}^N (e^{-\gamma\tau}K_{\tau}^\alpha)^i(\id - e^{-\gamma \tau} K_{\tau}^\alpha v))(x) = (\id - e^{-\gamma\tau(N+1)}K_{(N+1)\tau}^\alpha) v(x) = v(x) - e^{-\gamma\tau (N+1)} v(\Phi_{(N+1)\tau}(x)).
\end{align*}
In the same manner we obtain
\[\sum_{i=0}^N ((e^{-\gamma \tau}K_{\tau}^\alpha)^i \int_0^\tau e^{-\gamma t} r_t^\alpha(x) dt = \sum_{i=0}^N e^{-\gamma \tau i} \int_{i \tau  }^{(i+1)\tau} e^{-\gamma t} r_t^\alpha(x) dt. \]
Note that in particular we have 
\[\sum_{i=0}^N (e^{-\gamma \tau}K_{\tau}^\alpha)^i \int_0^\tau e^{-\gamma t} r_t^\alpha(x) dt = \int_0^{(N+1)\tau} e^{-\gamma t} r_t^\alpha(x) dt .\]

From this equation we see that calculating longer trajectories can be seen as a preconditioner of the Bellman equation.
% This observation can of course also be used in the variational Monte-Carlo ansatz.
\begin{remark}
Note that due to the Neumann series, we can solve \eqref{eq:bellman} pointwise by calculating trajectories with $\tau \to \infty$.
In this case, the linearized Bellman equation reduces to a regression problem, which is of course perfectly conditioned.
The numerics of this approach will, however, not be presented in this paper.
\end{remark}

\section{Numerical results}
We present results of numerical tests for different optimal control problems. For the implementation of the tensor networks we use the open source \code{c++} library \code{xerus} \citep{xerus}. 
The calculations were performed on a AMD Phenom II 6x 3.20GHz, 8 GB RAM openSUSE Leap 15.0 Linux distribution. In every test we consider a cost functional of the form
\begin{equation}\label{eq:cost}
    \argmin_{u \in L^2((0, \infty); \mathbb R^m)} \mathcal{J}(x, u) = \int_{0}^{\infty} \| y(t) \|^2 + 0.1 \|u(t)\|^2\ dt
\end{equation}
and a PDE, which we denote here as 
\[ \dot y = \tilde f(y) + g(y) u, \quad y \in \Theta\subset L^2(-1,1). \]
As the first step we discretize the PDE in space, such that we obtain a finite dimensional system of ODEs, which we also denote as
\[ \dot y = \tilde f(y) + g(y) u, \quad y \in \mathbb R^n. \]
Note that in order to fight the curse of dimensionality we apply tensor methods. Thus, we do not use an advanced method of discretization and instead use simple finite differences methods. We implement Algorithm \ref{algo:basic_pol_it} for the spatially discretized PDE. As polynomial ansatz spaces we use the tensor product of one-dimensional $H^1$-orthogonal polynomials as described in Section \ref{sect:TT}. The degree of the one-dimensional polynomials is specified in the test cases. \begin{remark}\label{rem:pictures}
In the following tests, we distinguish between the policy $\alpha$, the corrsponding cost estimator $v$ and the real generated cost $\mathcal J(\cdot, \alpha(\cdot))$. For fixed $x$, we obtain $v(x)$ by simply evaluating $v$. Here, no trajectory has to be computed. We obtain $\mathcal J(x, \alpha(x))$ by numerically integrating along the trajectory with initial condition $x$. Note that $\mathcal J(x, \alpha(x))$ is basically the numerical approximation of the cost functional with respect to a feedback law, defined in \eqref{eq:v_alpha}.
\end{remark}

\subsection{Test 1:  Diffusion with Unstable Reaction Term}
We consider a Schl\"ogl like system with homogeneous Neumann boundary condition, c.f. \cite[Test 2]{pol_approx_kunisch}. Solve \eqref{eq:cost} for $y \in \Theta = L^2(-1,1)$ subject to
\begin{align*}
\dot y &= \sigma \Delta y + y^3 + \chi_\omega u \\
y(0) &= x \\
\end{align*}
with homogeneous Neumann boundary condition and $\chi_\omega$ is the characteristic function w.r.t. $\omega = [-0.4, 0.4] \subset [-1,1]$. We choose $\sigma = 1$ and use a finite differences grid with $d \in \mathbb N$ grid points to discretize the spatial domain. We denote by $A$ this finite difference discritization of the Laplace operator and by $G\in \{0,1\}^d$ the discritization of the characteristic function $\chi_\omega$. Then we obtain a system of $d$ ordinary differential equations 

\begin{align*}
    &\dot y = \sigma A y + y^3 + Gu\\
    & y(0)=x
\end{align*}

Using the step-size $h = \frac{2}{d+1}$ we get a finite dimensional approximation of the term $\| y(t) \|_H^2$ in the cost functional. For this test we choose a spatial dimension of $n=32$. As the underlying equation is non linear, our ansatz for the value function is the tensor product of polynomials up to degree $4$. The internal ranks chosen are
\[ [3, 4, 5, 5, 5, 6, 6, 6, 6, 7, 7, 7, 7, 7, 7, 7, 7, 7, 7, 6, 6, 6, 6, 6, 6, 6, 5, 5, 5, 4, 3]. \]
We solve the HJB equation in 32 dimensions on the set $[-2,2]^d$. While the full ansatz space has dimension $5^{32}$, the TT has $5395$ degrees of freedom. We test two different loss functionals. First we choose the loss functional \eqref{eq:least_squares_penalty} with $\delta_1 = \delta_2 = 100$. We choose $\delta_3$ to be adaptive. In the beginning of one ALS sweep, we calculate the residuum $\tilde{\mathcal R}_N(v)$ and then set $\delta_3 = 10^{-3}\tilde{\mathcal R}_N(v)$ and keep it constant until the ALS sweep is complete. We denote the resulting value function by $v_{L_2}$ and the corresponding controller by $\alpha_{L^2}$. For the calculations we use $32768$ quasi Monte-Carlo samples.

\begin{remark}
We stress that the number of quasi Monte-Carlo samples is extremely small when comparing it to the dimension of the ambient space $32768 \ll 5^{32} \approx 10^{22}$ and only when comparing it to the degrees of freedom in the TT representation the numbers become comparable, $32768 \approx 6 \cdot 5395$. This further indicates, that the curse of dimensionality is broken by our ansatz. Additional studies, where the minimal number of samples is compared to the dimensions of the underlying systems have to be done.

\end{remark}

We introduce a second loss functional by incorporating information about $G^T \nabla v$ into the loss functional.  
The idea behind this approach is that for computing the control we do not only need a good approximation on $v$ but also of $\nabla v$. In particular, we need to evaluate the derivative of $v$ in direction $g = \chi_\omega$. Thus, by incorporating information about this derivative we hope to obtain an improved controller.

Denoting for a sample $x_i$ by $\tilde x_i = x_i + \varepsilon \chi_\omega$, we modify $\mathcal R_N(v)$, c.f. \eqref{eq:vmc_loss_functional}, by adding a discrete derivative
\ifnum\switch=2
\begin{equation}\label{eq:vmc_h1}
     \mathcal R_N^{H_1} (v) = \sum_{i=1}^N | \id - K_\tau^\alpha  v(x_i) - R(x_i)|^2 \\
     + | \frac {(\id - K_\tau^\alpha ) (v(\tilde x_i) - v(x_i)) - ( R(\tilde x_i) - R(x_i))} \varepsilon|^2
\end{equation} 
\fi
\ifnum\switch=1
\begin{multline}\label{eq:vmc_h1}
     \mathcal R_N^{H_1} (v) = \sum_{i=1}^N | \id - K_\tau^\alpha ) v(x_i) - R(x_i)|^2 \\
     + | \frac {(\id - K_\tau^\alpha ) (v(\tilde x_i) - v(x_i)) - ( R(\tilde x_i) - R(x_i))} \varepsilon|^2
\end{multline}
\fi
and plugging $\mathcal R_N^{H^1}$ into \eqref{eq:least_squares_penalty}.
For this cost functional we choose 16384 quasi Monte-Carlo samples and denote the value function by $v_{H_1}$ and the corresponding controller by $\alpha_{H^1}$. For every sample we calculate a trajectory with $1000$ steps of size $\tau = 0.001$ using the classical Runge-Kutta 4 method. We report that when we ran the same algorithms with only $100$ steps, the resulting controls where not better than the LQR controller. This shows the effect of the preconditioner, c.f. Section \ref{sec:precon}. In both cases we stop after 100 policy updates, where the relative difference between value functions was $~10^{-3}$. Note that so many iterations were necessary, because we stopped the ALS algorithm after 20 ALS sweeps, where the solution was not exact.

We also compute the optimal open loop control via a classical gradient descent method as described in \citep{troltzsch2010optimal}.
Here, we have to restrict ourselves to a finite time frame and choose $T = 5$ for computing the open loop control, because at this point the norm of the state was negligible.

We first test the feedback controllers for certain initial values, visualized in figure \ref{fig:schloegl_test_certain}. For both the $\alpha_{L_2}$ and the $\alpha_{H_1}$ controller, significant improvement in cost is noticable, with the greatest being approximately $50\%$ of the cost saved compared to the LQR controller. Moreover, we see that the computed feedback laws generate costs close to optimal costs for the tested initial values.
   \begin{figure}[htbp]
     \subfloat[Initial values.]{%
       \includegraphics[width=0.45\textwidth]{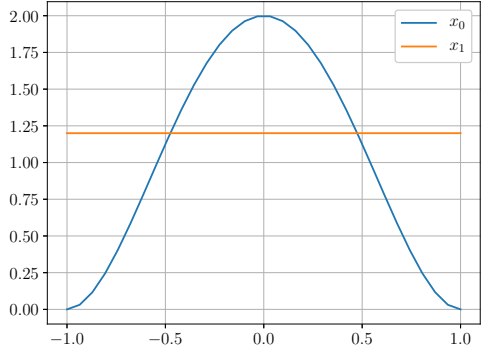}
     }
     \hfill
     \subfloat[Generated controls, initial value $x_{0}$.]{%
       \includegraphics[width=0.45\textwidth]{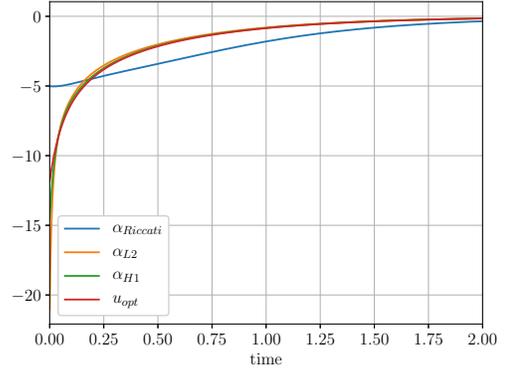}
     }
     \vfill
     \subfloat[Generated controls, initial value $x_{1}$.]{%
       \includegraphics[width=0.45\textwidth]{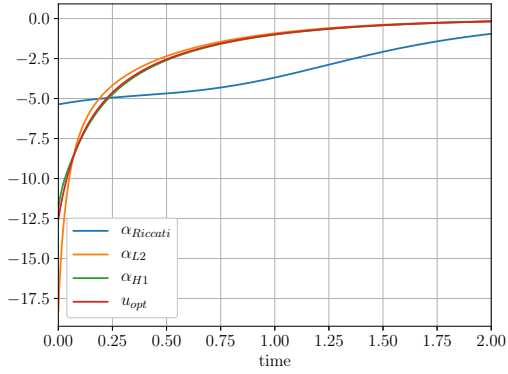}
     }
     \hfill
     \subfloat[Generated cost and Least-Squares error. Blue is Riccati, orange is $V_{L_2}$, black is $V_{H_1}$ and red is the open-loop optimal control. $v(x_i)$ is the approximated value function evaluated at $x_i$ and $\mathcal J(x_i, \alpha(x_i))$ is the actual cost, c.f. Remark \ref{rem:pictures}.\label{fig:S_f3}]{%
       \includegraphics[width=0.45\textwidth]{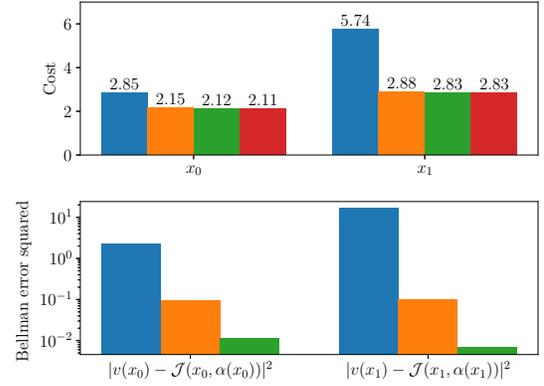}
     }
\caption{The generated controls for different initial values.}\label{fig:schloegl_test_certain}
   \end{figure}

Next we test the feedback law for random initial values. Note that because of the diffusion, equally distributed samples and normally distributed samples yield low cost on average and in this case no improvements of the cost is to be expected. Thus, we use a special distribution of initial values that we specify now. For every initial value we choose an equally distributed integer between $2$ and $20$. This number is the degree of a random polynomial. Next we choose a polynomial with normal distributed coefficients of the degree we chose. As this polynomial $\tilde p$ has its maximum in the interval $[-1, 1]$ on the boundary with high probability, we modify the polynomial in the following way $p(x) := \tilde p(x) (x-1)(x+1)$, such that we have $p(-1) = p(1) = 0$. 
Note that these initial values do not obey the Neumann boundary condition. However, due to our discretization this does not pose a problem.
Finally, we rescale $p$ such that its maximum in $[-1,1]$ is $1.75$. In order to have an idea how these initial values look, we plotted $10$ initial values in figure \ref{fig:B_random_initial}.
We report that for these initial values the LQR controller was not stabilizing in $443$ out of $1000$ initial values, while the $\alpha_{L_2}$ controller was not stabilizing in $12$ cases. The $\alpha_{H_1}$ controller was stabilizing for every initial value.

\begin{remark}
Note that from these initial values we can deduce that the area of attraction was increased by computing the $\alpha_{L_2}$ and $\alpha_{H_1}$ controller. This further underlines the importance of computing controllers for non linear systems. However, other methods for non linear systems, like Model Predictive Control, might also increase the area of attraction.
\end{remark}
We compare the average cost only for the initial values where LQR was stabilizing and we observe that even for these initial values, we obtain an average cost reduction of more than $25\%$. This is visualized in figure \ref{fig:S_random_costandbellman}.
   \begin{figure}[htbp]
     \subfloat[Percentage of unstable initial values for different controllers, 1000 initial values $\sim$ polynomial distribution.\label{fig:S_random_initial}]{%
       \includegraphics[width=0.45\textwidth]{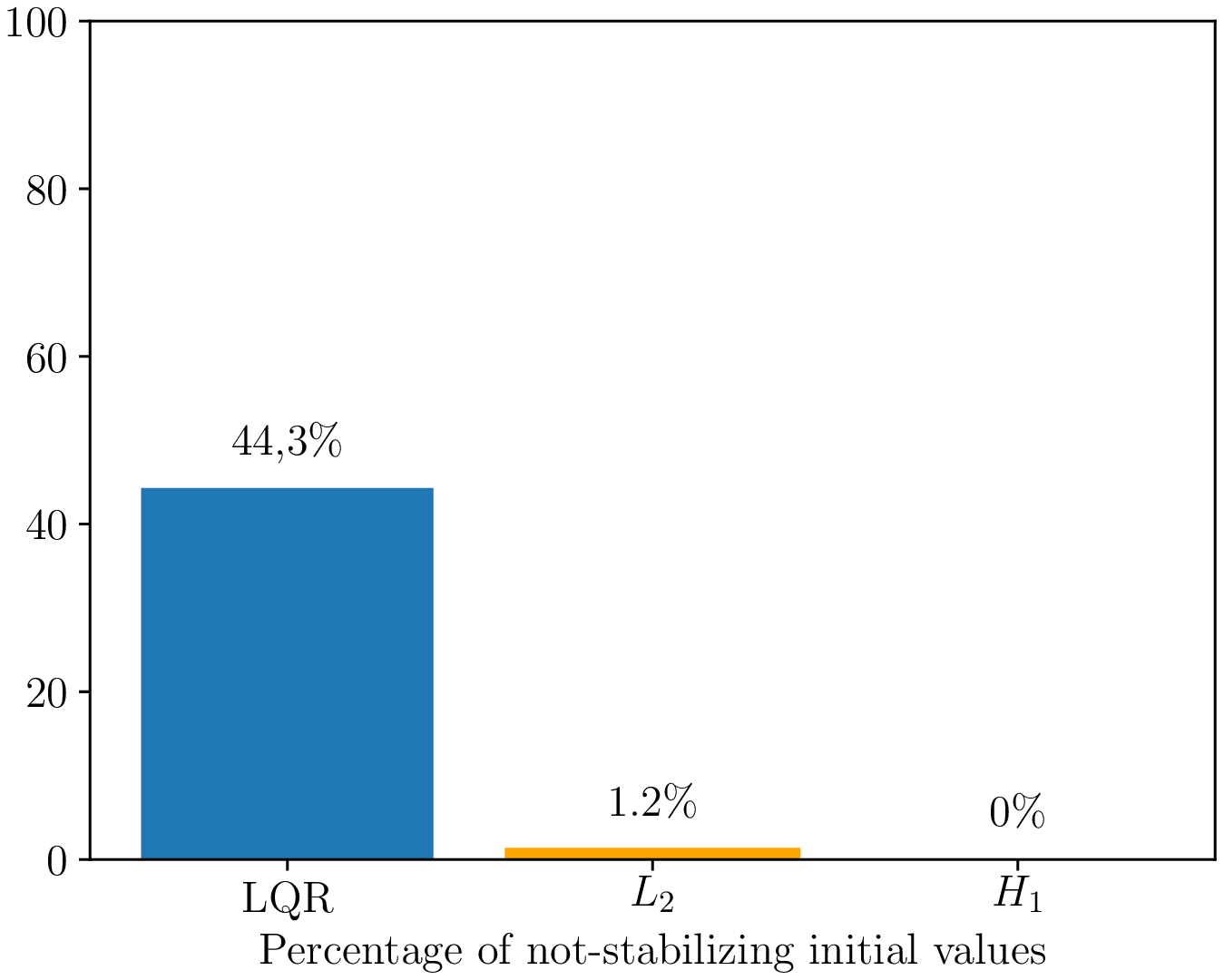}
     }
     \hfill
     \subfloat[Average cost for $1000$ initial values, left $x \sim \mathcal U(-3,3)$. Right: $x_1 \sim $ polynomial distribution, only the $557$ values where the LQR controller was stabilizing.]{%
       \includegraphics[width=0.45\textwidth]{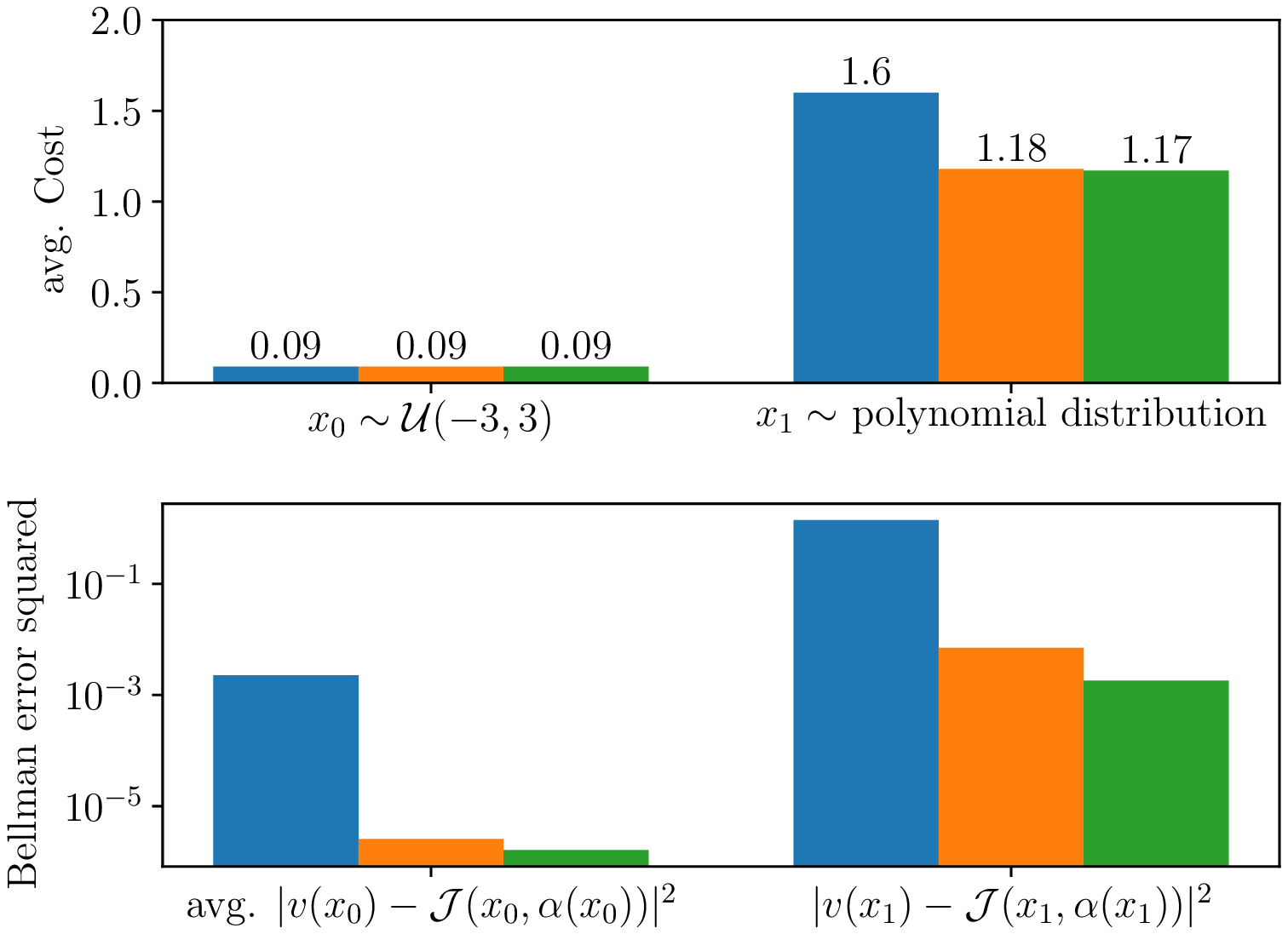}
     }
\caption{The generated cost for random initial values.}\label{fig:S_random_costandbellman}
   \end{figure}
\subsection{Test 2: Viscous Burgers'-like equation}\label{subsec:burgers}
As underlying equation we use a one-dimensional viscous Burgers'-like equation similar to \cite[Test 1]{pol_approx_kunisch}. Solve \eqref{eq:cost} for $y \in \Theta = L^2(-1,1)$ subject to
\begin{align*}
\dot y &= \sigma \Delta y +  \nabla (\frac{y^2}{2}) + 1.5 y e^{-0.1y} + \chi_\omega u \\
y(0) &= x
\end{align*}
with homogeneous Dirichlet boundary condition. Here, we use the same discretization as in Section \ref{subsec:burgers}. The constants are the same except for $\sigma = 0.2,\ \omega = [-0.5, 0.2]$. Again, an ansatz of polynomials up to degree $4$ is used. We choose the same ranks as in the last test
\[ [3, 4, 5, 5, 5, 6, 6, 6, 6, 7, 7, 7, 7, 7, 7, 7, 7, 7, 7, 6, 6, 6, 6, 6, 6, 6, 5, 5, 5, 4, 3] \]
and solve the HJB on $[-2,2]^{32}$. We stress that the choice of ranks is corresponds to the ranks of the LQR controller in TT-format, rounded with threshold $10^{-6}$. We calculate $v_{L_2}$ and $v_{H_1}$ as before only change the step size $\tau = 0.01$ and $\delta_3 = 10^{-6}\tilde{\mathcal R}_N(v)$.

We report that we also attempted computing an optimal open-loop control for this test, but did not succeed with both constant step size and with Armijos step size control. Instead, we added a control computed by the Nelder-Mead method provided by the python package SciPy \citep{SciPy}. We also report that while this method did not converge as well, improvement w.r.t. the cost was archieved and thus we included the result.

From figure \ref{fig:burgers_test_certain} we deduce that for certain initial values, significant improvement of cost is possible for both cost functional with the greatest improvement being $9.2\%$ of the cost. In both cases the $H^1$ cost functional gave small performance improvements, while the resulting cost is close to the approximated optimal cost. 
We again notice that for these initial values our calculated value functions are more accurate than the Riccati value function.

   \begin{figure}[htbp]
     \subfloat[Initial values.]{%
       \includegraphics[width=0.45\textwidth]{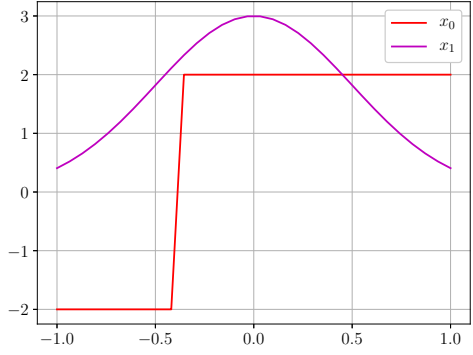}
     }
     \hfill
     \subfloat[Generated controls, initial value $x_{0}$]{%
       \includegraphics[width=0.45\textwidth]{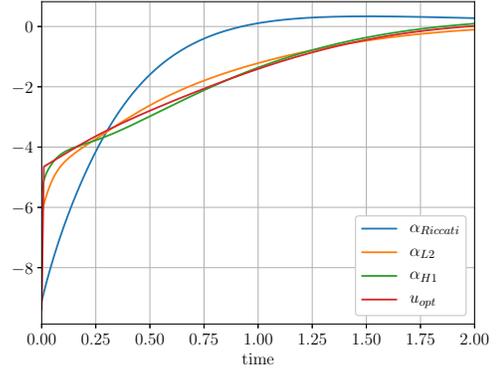}
     }
     \vfill
     \subfloat[Generated controls, initial value $x_{1}$.]{%
       \includegraphics[width=0.45\textwidth]{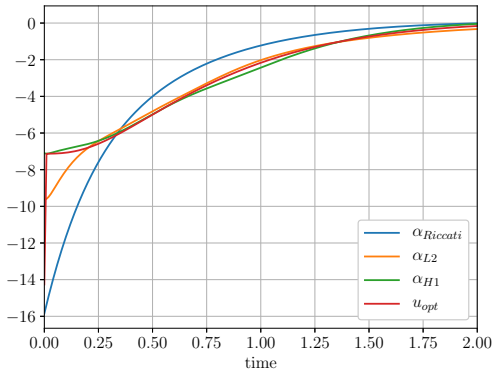}
     }
     \hfill
     \subfloat[Generated cost and Least-Squares error. Blue is Riccati, orange is $V_{L_2}$ and green is $V_{H_1}$. $v(x_i)$ is the approximated value function evaluated at $x_i$ and $\mathcal J(x_i, \alpha(x_i))$ is the actual cost, c.f. Remark \ref{rem:pictures}.\label{fig:B_f3}]{%
       \includegraphics[width=0.45\textwidth]{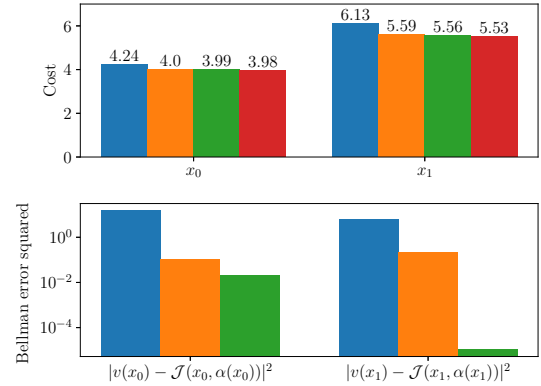}
     }
\caption{The generated controls for different initial values.}\label{fig:burgers_test_certain}
   \end{figure}

Next we again test random initial values. We are using the same setup as in test \ref{subsec:burgers}. The normalization of the polynomial random initial values is changed to $2.75$ instead of $1.75$ because the integration area was changed. In figure \ref{fig:B_random_costandbellman} we compare the performance of the controllers for $1000$ random initial values. For $x \sim \mathcal U(-3,3)$, no significant improvements in cost are visible, while the cost prediction for our value functions is more exact. The improvements are, however, not significant for these initial values. For $x$ distributed in the way described above there is a visible difference. On average, $6 \%$ of the cost is saved by $v_{L_2}$ and $v_{H_1}$. Moreover, the computed value functions predict their corresponding cost functional more exactly than the LQR value function.
   \begin{figure}[htbp]
     \subfloat[Examples of $10$ random initial values, drawn as described above, normalized such that the maximum is $2.75$.\label{fig:B_random_initial}]{%
       \includegraphics[width=0.45\textwidth]{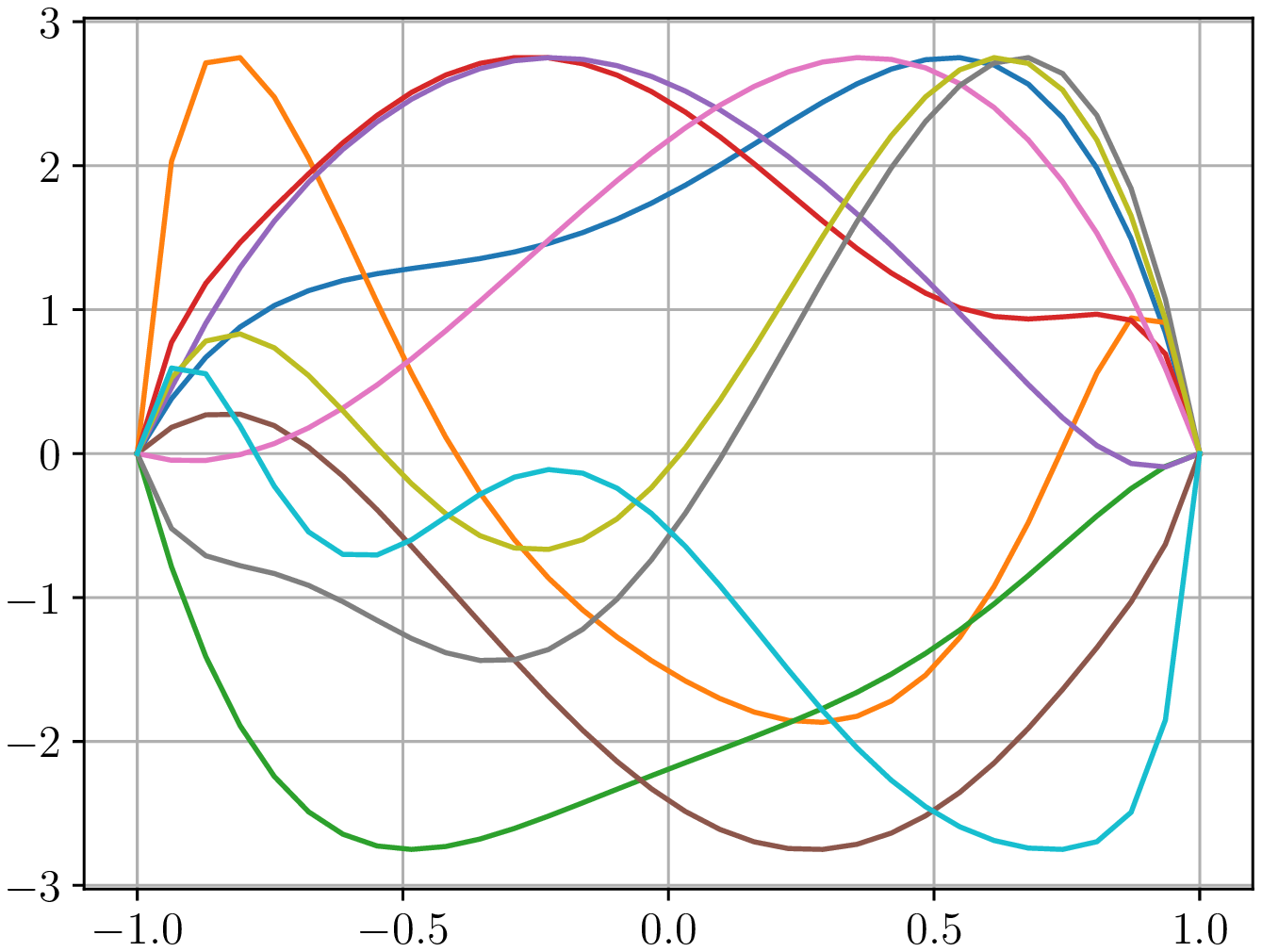}
     }
     \hfill
     \subfloat[Average cost for $1000$ initial values, left $x \sim \mathcal U(-3,3)$, right $x \sim $ polynomial distribution. $v(x_i)$ is the approximated value function evaluated at $x_i$ and $\mathcal J(x_i, \alpha(x_i))$ is the actual cost, c.f. Remark \ref{rem:pictures}.]{%
       \includegraphics[width=0.45\textwidth]{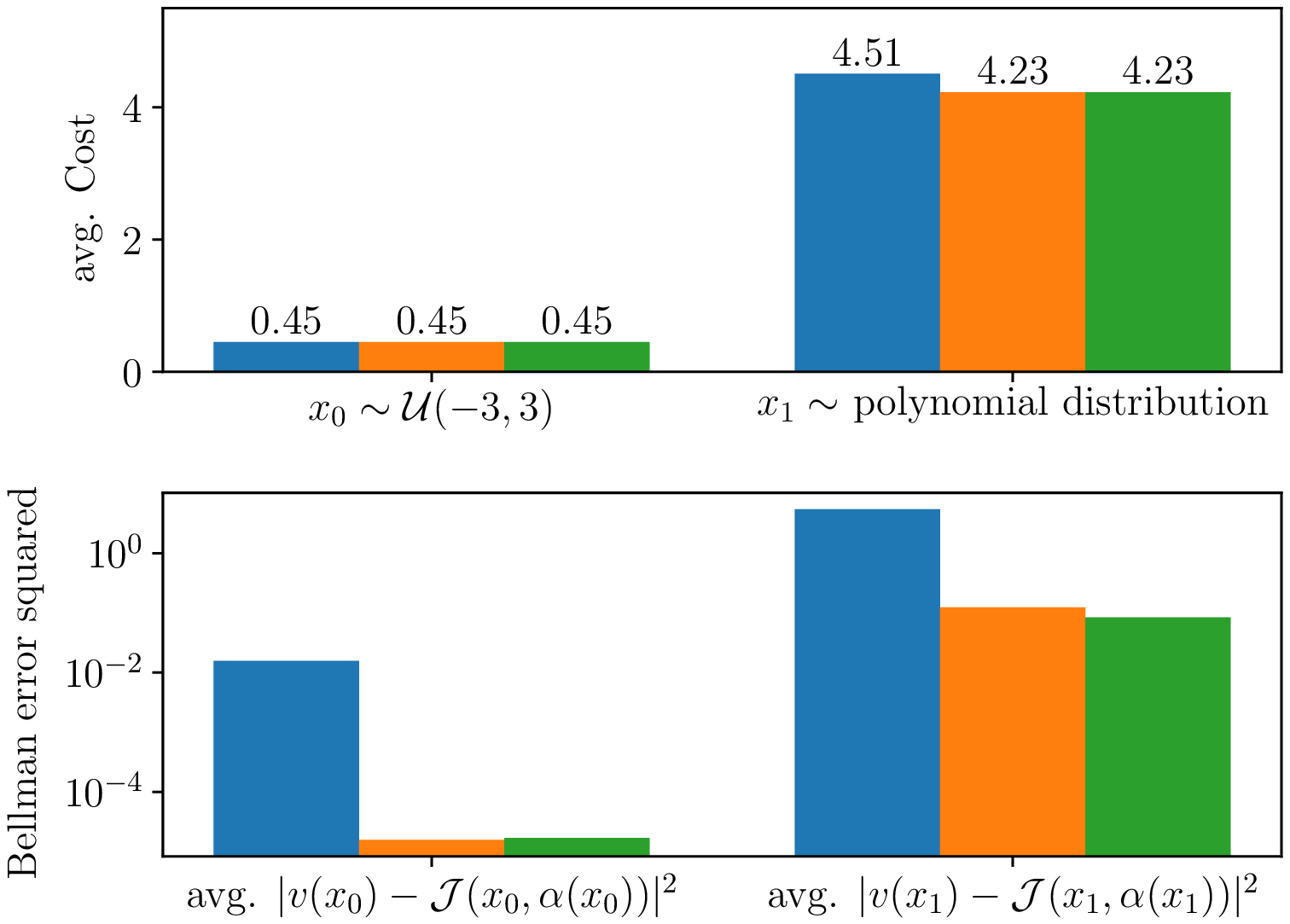}
     }
\caption{The generated cost for random initial values.}\label{fig:B_random_costandbellman}
   \end{figure}

\section{Conclusion}

We have considered a Policy Iteration in a minimal residual setting. 
Using variational Monte-Carlo, a straight forward implementation of the algorithm is possible. In our numerical studies we have demonstrated that for non linear tensor ansatz spaces the algorithm calculates near optimal controllers of simple structure, even in high dimensions. In real applications efficient evaluation of the controller is of utmost importance to enable online controlling.

Although we have consistently outperformed the LQR controller, we observe that the LQR controller works surprisingly well even in the non linear case. A careful and accurate treatment of the equations during the iteration procedure is mandatory to 
surpass this controller. Since we were able to find good approximations of the value function with polynomials of degree $4$, we conjecture that the value function can be approximated by smooth functions. In this direction there are recent theoretical results on the smoothness and Taylor expansion of certain PDE constrained optimal control problems, see \citep{BreitenStokes, BREITEN20191361}. 

Our present approach is closely related to dynamical programming and reinforcement learning, but we have made some essential modifications. Indeed, modern tools from machine learning like deep neural networks can be applied instead of hierarchical tensor representations.
% We have doubts that immediate application of deep neural networks together with stochastic gradient optimization can produce sufficiently accurate results in a comparable setting, e.g number of samples.
For such smooth functions sparse polynomial 
approximations seems to be promising, too. 
%This may change when imposing  control inequality constraints or bang-bang control, where the ReLu activation function even appears in the definition of the problem. 

% In order to approach realistic high-dimensional problems for PDE constrained optimal control, we 
% believe that dimension reduction and reduced ansatz spaces provide a feasible strategy in combination with the current approach. While they are not included in this paper, first tests of a reduced order system for non linear problems yield promising results.

\section{Acknowledgements}
Leon Sallandt, Reinhold Schneider and Mathias Oster acknowledge support from the Research Training Group ”Differential Equation- and Data-driven Models in Life Sciences and Fluid Dynamics: An Interdisciplinary Research Training Group (DAEDALUS)” (GRK 2433) funded by the German Research Foundation (DFG).
We would like to thank Fredi Tr\"oltzsch for sharing his broad insight to PDE constrained optimization. We also thank Michael G\"otte and Philipp Trunschke for fruitful discussions.
     \bibliography{HJB_and_HT}

\end{document}